\documentclass{article}

\usepackage{amsmath,amssymb,amsthm,hyperref}
\usepackage{indentfirst,float}
\usepackage{amsmath,amsfonts,amssymb,stackrel,amsthm,bbm,amssymb,hyperref,array,enumitem}
\usepackage{geometry,centernot,longtable,makeidx,xcolor,mathtools,xpatch,multirow,graphicx}
\newtheorem{lemma}{Lemma}[section]
\newtheorem{definition}[lemma]{Definition}

\newtheorem{corollary}[lemma]{Corollary}
\newtheorem{theorem}{Theorem}
\newtheorem{observation}{Observation}

\newcommand\ex{{\mathrm{ex}}}
\newcommand\eex{{\mathrm{Ex}}}

\title{On edge-ordered graphs with linear extremal functions\thanks{A preliminary version of this paper appeared in the proceedings of EUROCOMB'23, \cite{EC23}}}
\author{Gaurav Kucheriya \thanks{Department of Applied Mathematics, Charles University, Prague, Czechia, Email: \href{mailto:gaurav@kam.mff.cuni.cz}{gaurav@kam.mff.cuni.cz}. Supported by GA\v{C}R grant 22-19073S and SVV–2023–260699.} \and G\'abor Tardos \thanks{Alfr\'ed R\'enyi Institute of Mathematics, Budapest, Hungary, Email: \href{mailto:tardos@renyi.hu}{tardos@renyi.hu}. Supported by
the ERC advanced grants ERMiD and GeoScape and the National Research, Development and Innovation
Office (NKFIH) grants K-132696 and SNN-135643}}
\date{}
\begin{document}
\maketitle
\begin{abstract}
The systematic study of Tur\'an-type extremal problems for edge-ordered graphs was initiated by Gerbner et al.\ in 2020. Here we characterize connected edge-ordered graphs with linear extremal functions and show that the extremal function of other connected edge-ordered graphs is $\Omega(n\log n)$. This characterization and dichotomy are similar in spirit to results of F\"uredi et al.\ (2020) about vertex-ordered and convex geometric graphs. We also extend the study of extremal function of short edge-ordered paths by Gerbner et al.\ to some longer paths.
\end{abstract}

\section{Introduction}

Tur\'an-type extremal graph theory asks how many edges an $n$-vertex simple graph can have if it does not contain a subgraph isomorphic to a \emph{forbidden graph}. We introduce the relevant notation here.

\begin{definition}
We say that a simple graph $G$ \emph{avoids} another simple graph $H$, if no subgraph of $G$ is isomorphic to $H$. The Tur\'an number $\ex(n,H)$ of a \emph{forbidden} finite simple graph $H$ (having at least one edge) is the maximum number of edges in an $n$-vertex simple graph avoiding $H$.
\end{definition}

This theory has proved to be useful and applicable in combinatorics, as well as in combinatorial geometry, number theory and other parts of mathematics and theoretical computer science.

Tur\'an-type extremal graph theory was later extended in several directions, including hypergraphs, geometric graphs, convex geometric graphs, vertex-ordered graphs, etc. Here we work with \emph{edge-ordered graphs} as introduced by Gerbner, Methuku, Nagy, P\'alv\"olgyi, Tardos and Vizer in \cite{GMNPTV}. The several extensions of extremal graph theory each proved useful and applicable in different parts of mathematics and this also holds for the (still new) edge-ordered version discussed here, see e.g. \cite{KP}. Let us recall the basic definitions.

\begin{definition}
An \emph{edge-ordered graph} is a finite simple graph $G$ together with a linear order on its edge set $E$. We often give the edge-order with an injective labeling $L: E\to\mathbb R$. We denote the edge-ordered graph obtained this way by $G^L$, in which an edge $e$ precedes another edge $f$ in the edge-order (denoted by $e<f$) if $L(e)<L(f)$. We call $G^L$ the \emph{labeling} or \emph{edge-ordering} of $G$ and call $G$ the simple graph \emph{underlying} $G^L$.

An isomorphism between edge-ordered graphs must respect the edge-order. A subgraph of an edge-ordered graph inherits the edge-order and so it is also an edge-ordered graph. We say that the edge-ordered graph $G$ \emph{contains} another edge-ordered graph $H$, if $H$ is isomorphic to a subgraph of $G$ otherwise we say that $G$ \emph{avoids} $H$.

For a positive integer $n$ and an edge-ordered graph $H$, let the Tur\'an number $\ex_<(n,H)$ be the maximal number of edges in an edge-ordered graph on $n$ vertices that avoids $H$. Fixing the \emph{forbidden edge-ordered graph} $H$, $\ex_<(n, H)$ is a function of $n$ and we call it the \emph{extremal function} of $H$. Note that this definition does not make sense if $H$ has no edges, so we insist that $H$ is \emph{non-trivial}, that is, it has at least one edge.
\end{definition}

Bra\ss, K\'arolyi and Valtr, \cite{BKV} introduced \emph{convex geometric graphs} while Pach and Tardos, \cite{PT} introduced \emph{vertex-ordered graphs} and studied their extremal theories. In both cases a simple graph is given extra structure by specifying an order on their vertices (a cyclic order for convex geometric graphs and a linear order for vertex-ordered graphs). Characterizing the convex geometric or vertex-ordered graphs with a linear extremal function seems to be beyond reach (so far), but F\"uredi, Kostochka, Mubayi and Verstra\"ete, \cite{FKMV} found such a characterization for \emph{connected} convex geometric graphs and also for \emph{connected} vertex-ordered graphs. The situation seems to be similar for edge-ordered graphs: while we could not give a general characterization of edge-ordered graphs with linear extremal functions, in Section~\ref{sec2}  we characterize when connected edge-ordered graphs have linear extremal functions. This characterization is also a dichotomy result: we show that whenever the extremal function of a connected edge-ordered graph is not linear, it must be $\Omega(n\log n)$.

Gerbner et al., \cite{GMNPTV} estimated the extremal functions of many small edge-ordered graphs, among them all edge-ordered paths with up to four edges. In Section~\ref{sec4} we extend these results to some 5-edge paths. Lastly, we provide some remarks and open problems in Section~\ref{sec5}.

\section{Connected edge-ordered graphs with linear extremal functions}\label{sec2}

In classical (unordered) extremal graph theory the following dichotomy is immediate:

\begin{observation}\label{simple}
If $H$ is a forest, then $\ex(n,H)=O(n)$, otherwise $\ex(n,H)=\Omega(n^c)$ for some $c=c(H)>1$.
\end{observation}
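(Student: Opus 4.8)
The statement splits into two classical cases, and I would prove each directly.

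\emph{The forest case.} Suppose $H$ is a forest and put $r=|V(H)|$. First I would record the standard fact that every simple graph $G$ with minimum degree at least $r-1$ (and at least $r$ vertices) contains $H$: order $V(H)$ as $h_1,\dots,h_r$ so that each $h_i$ with $i\ge 2$ has at most one neighbour among $h_1,\dots,h_{i-1}$ — possible because the components of $H$ are trees, which can be listed one at a time in BFS order — and then embed $H$ greedily, noting that when $h_i$ does have a (unique) earlier neighbour $h_j$, the image of $h_j$ has at least $r-1>i-2$ neighbours, so one of them is still unused. Next, every $n$-vertex graph with more than $(r-1)n$ edges has average degree exceeding $2(r-1)$ and hence, by the usual ``dense subgraph'' lemma, contains a subgraph of minimum degree at least $r-1$; that subgraph contains $H$. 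Therefore $\ex(n,H)\le(r-1)n=O(n)$ (the inequality being trivial when $n<r$).

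\emph{The non-forest case.} If $H$ is not a forest then it contains a cycle, i.e.\ a subgraph isomorphic to $C_\ell$ for some $\ell\ge 3$. Since any $C_\ell$-free graph is automatically $H$-free, we get $\ex(n,H)\ge\ex(n,C_\ell)$, so it suffices to show $\ex(n,C_\ell)=\Omega(n^{1+1/\ell})$. I would obtain this by the probabilistic deletion method: let $G=G(n,p)$ with $p=n^{-1+1/\ell}$; the expected number of edges is $\sim\frac12 n^{1+1/\ell}$, while for each $3\le i\le\ell$ the expected number of cycles of length $i$ is at most $n^i p^i/(2i)\le n/(2i)$, so deleting one edge from each cycle of length at most $\ell$ destroys, in expectation, only $O(n)$ edges. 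Hence for large $n$ some $n$-vertex graph has at least $\frac14 n^{1+1/\ell}$ edges and girth greater than $\ell$, in particular no copy of $C_\ell$. This yields $\ex(n,H)=\Omega(n^{1+1/\ell})$, so $c(H)=1+1/\ell>1$ works.

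\emph{Main obstacle.} Neither direction is genuinely hard — which is why the statement is phrased as an immediate observation — and the only step that needs a moment's care is the deletion estimate in the second case: one must check that cycles of \emph{every} length up to $\ell$, not merely $C_\ell$ itself, contribute negligibly compared with the edge count. The choice $p=n^{-1+1/\ell}$ is exactly the one that makes the $\ell$-cycle term the largest among these yet still only $\Theta(n)$, hence dominated by the $\Theta(n^{1+1/\ell})$ edges. If one prefers to avoid the probabilistic argument, one can instead cite a known explicit construction of dense graphs of girth larger than $\ell$, or simply quote the classical lower bound $\ex(n,C_\ell)=\Omega(n^{1+1/\ell})$.
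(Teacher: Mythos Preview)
Your proof is correct. Both halves are standard and you have executed them cleanly; the only point worth a minor remark is that in the deletion argument you should explicitly invoke linearity of expectation on the quantity (edges minus short cycles) to guarantee a single outcome witnessing the bound, but this is routine and you clearly have it in mind.

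As for comparison with the paper: there is nothing to compare. The paper states this dichotomy as an ``immediate'' observation and offers no proof at all --- it is background for the edge-ordered analogue, not a result of the paper. So your write-up supplies the classical details the paper chose to omit. If you wanted to match the paper's tone you could compress the forest case to a one-line reference to the Erd\H{o}s--S\'os type bound $\ex(n,H)\le (|V(H)|-1)n$ for forests, and the non-forest case to a citation of the standard girth construction (Erd\H{o}s' probabilistic argument or the Bondy--Simonovits lower bound for even cycles); but writing it out as you did is harmless.
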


The analogous statement fails for edge-ordered graphs. For example, the paper \cite{GMNPTV} exhibits several edge-ordered paths with extremal functions $\Theta(n\log n)$ (see Theorem~\ref{34edgepath} below). Therefore, when looking for an analogous result for edge-ordered graphs, we have a choice to make. Either we want to characterize the edge-ordered graphs with linear extremal functions, or the ones with extremal functions that is \emph{almost linear}, i.e., $n^{1+o(1)}$. In the latter direction Gerbner et al.\ \cite{GMNPTV} formulated a conjecture that we recently verified, see \cite{KT}. The former problem seems to be considerably more difficult as there is not even a reasonable conjecture characterizing all edge-ordered graphs with a linear extremal function. 

The first result in this harder direction appeared in the MSc thesis of the first author, \cite{Kthesis}: he gave a simple characterization of edge-ordered \emph{paths} with linear extremal functions. In this section we generalize this result and provide a characterization for \emph{connected} edge-ordered graphs with linear extremal functons, see Theorem~\ref{dich}. This is the same restriction considered by F\"uredi et al., \cite{FKMV} with respect to vertex-ordered and convex geometric graphs.

Our theorem also states that if the extremal function of a connected edge-ordered graph is not linear, then it is $\Omega(n\log n)$. Such a dichotomy does not hold for edge-ordered graphs in general as Gerbner et al.\ also exhibit a (necessarily disconnected) edge-ordered graph whose extremal function is $\Theta(n\alpha(n))$, where $\alpha$ is the inverse of the Ackermann function.
\medskip

In order to formulate the characterization and dichotomy in Theorem~\ref{dich} we need to introduce some terminology. The \emph{order chromatic number} $\chi_<(G)$ of an edge-ordered graph $G$ is the smallest chromatic number $\chi(H)$ of a simple graph $H$ such that all edge-orderings of $H$ contain $G$. If no such $H$ exists we write $\chi_<(G)=\infty$. The order chromatic number was introduced in the paper \cite{GMNPTV} to play the role of the (ordinary) chromatic number in a version of the Erd\H os-Stone-Simonovits theorem for edge-ordered graphs, see Theorem~2.3 in \cite{GMNPTV}. For the purposes of our Theorem~\ref{dich}, one does not even have to apply this definition, it is enough to apply Lemma~\ref{semi-right} below that gives a simple characterization when the order chromatic number of an edge-ordered forest is two based on the notion of close vertices. We call a vertex $v$ of an edge-ordered graph \emph{close} if the edges adjacent to $v$ form an interval in the edge-order. For example, all blue vertices in Figure~\ref{fig1}, as well as all but two red vertices are close.

\begin{lemma}[\cite{GMNPTV}]\label{ocn2}
	A non-trivial edge-ordered forest has order chromatic number 2 if and only if it has a proper 2-coloring such that all vertices in one of the color classes are close.
\end{lemma}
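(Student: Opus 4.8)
The plan is as follows. First, two reductions. Since every host containing the non-trivial graph $G$ must have an edge, $\chi_<(G)\ge 2$ always, so the assertion is that $\chi_<(G)\le 2$ if and only if $G$ admits such a $2$-coloring. Moreover every bipartite graph embeds into some $K_{N,N}$, and any edge-ordering of $K_{N,N}$ restricts to an edge-ordering of a subgraph, so $\chi_<(G)\le 2$ is equivalent to: \emph{for some $N$, every edge-ordering of $K_{N,N}$ contains $G$}. Next I would unpack the coloring condition. If $A\cup B$ is a proper $2$-coloring of the forest $G$ in which every vertex of $A$ is close, then each edge has exactly one endpoint in $A$, and for distinct $u,v\in A$ the sets $I_u,I_v$ of edges at $u$ and at $v$ are disjoint: each is an interval of the edge-order, so if they overlapped, then, taking the notation so that the least edge at $u$ does not exceed the least edge at $v$, that least edge at $v$ would lie in the interval $I_u$ and hence be incident to $u$ as well as to $v$ --- impossible, as $A$ is independent. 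Hence $\{I_v:v\in A\}$ cuts $E(G)$ into consecutive blocks $B_1<\cdots<B_k$, each a star with its center in $A$ and its leaves in $B$, the centers pairwise non-adjacent; conversely any such block decomposition gives a close color class, since all the centers lie in one class of a proper $2$-coloring of $G$ (in each component the path between two centers alternates between centers and leaves, so has even length). So the hypothesis states exactly that $E(G)$, read in increasing order, is a concatenation of stars with distinct, pairwise non-adjacent centers.

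\textbf{Easy direction} (no such coloring $\Rightarrow \chi_<(G)>2$). Order $E(K_{N,N})$ by taking all edges at the first vertex of the part $X$, then all edges at the second vertex of $X$, and so on. Every vertex of $X$ then has its edges forming an interval, so in any copy of $G$ in $K_{N,N}$ the vertices lying in $X$ form a close color class of $G$; as $G$ has none, this ordering avoids $G$. Since $N$ is arbitrary, the reduction gives $\chi_<(G)>2$.

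\textbf{Hard direction} (block decomposition $\Rightarrow \chi_<(G)=2$). I would show that for $N$ large in terms of $G$, every edge-ordering $L$ of $K_{N,N}$ contains $G$, by induction on the number $k$ of blocks. The case $k=1$ (a star plus isolated vertices) is clear once $N\ge\Delta(G)$. For the step, let $v$ be the center of the top block $B_k$ and put $G'=G-v$, a forest with $k-1$ blocks. Given $L$, an averaging argument should produce a threshold $\tau$, a vertex $x^*$ in one part, and a large set $Y^*$ in the other part such that all edges from $x^*$ to $Y^*$ have rank above $\tau$, while the edges of rank at most $\tau$ that avoid $x^*$ and stay within $Y^*$ still span a large complete bipartite graph. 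Applying the inductive hypothesis inside that graph yields a copy of $G'$ using only edges of rank at most $\tau$ with all its vertices in $Y^*$; in particular the images of the leaves of $v$ shared with earlier centers lie in $Y^*$, so we can attach $x^*$ as the image of $v$ and join it --- by edges of rank above $\tau$ --- to those shared-leaf images and to further vertices of $Y^*$ serving as the new leaves of $v$, obtaining a copy of $G$ with the correct block structure.

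\textbf{Main obstacle.} The technical heart is this averaging/routing step: one must reserve enough high-rank edges at $x^*$ to fit all of $B_k$ (so $Y^*$ cannot be too small) yet keep the low part, restricted to $Y^*$, rich enough to carry a complete bipartite graph large enough for the induction, and one must also make the internal order of each block of the copy agree with that of $G$, which dictates which vertex of $Y^*$ plays which leaf. The block structure of $G$ is what makes the peeling possible at all, since each step merely tacks one more ``interval of edges at a single vertex'' onto the top of the order; and one can afford to lose a constant factor in $N$ at each of the constantly many steps.
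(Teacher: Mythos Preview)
The paper does not prove this lemma at all; it quotes it from \cite{GMNPTV} and uses it as a black box, so there is no ``paper's own proof'' to compare against.

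Your easy direction is correct. In the lexicographic ordering of $K_{N,N}$ by the $X$-endpoint, every vertex of $X$ is close, and this property is inherited by the preimage of $X$ under any embedding of $G$ (because any edge of the copy not at that preimage vertex lies in a different $X$-block, hence entirely above or below the edges at that vertex). So the absence of a close colour class forces every such ordering to avoid $G$.

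Your hard direction, however, has a real gap. First a cosmetic issue: you write that $Y^*$ lies ``in the other part'' yet ask the low edges ``within $Y^*$'' to span a large complete bipartite graph; presumably you mean large subsets of both parts, with $Y^*$ on the side opposite $x^*$. More seriously, the step you flag as the ``main obstacle'' is genuinely unresolved. When the top centre $v$ has leaves that already occur in $G'$ (and in a forest $v$ may share one leaf with each earlier centre), the images of those shared leaves are fixed by the inductive copy of $G'$, and then the relative order of the edges from $x^*$ to those \emph{specific} vertices of $Y^*$ is forced by $L$ --- it need not match the internal order of $B_k$. Nothing in your averaging step controls this; having all $x^*$--$Y^*$ edges above $\tau$ says nothing about their order among themselves. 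To repair the argument you would need either to strengthen the inductive hypothesis (embed $G'$ with prescribed targets for the $B$-vertices, which is much stronger), or to run a pigeonhole over candidate $x^*$'s \emph{after} fixing the copy of $G'$ to obtain one with the correct permutation on the shared leaves --- and then still thread the free leaves into the right slots. Either fix requires substantially more than ``losing a constant factor in $N$'', and neither is carried out. The proof in \cite{GMNPTV} proceeds instead through their machinery of canonical edge-orderings of $K_{N,N}$ (cf.\ Theorem~2.4 there), which sidesteps exactly this difficulty.
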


We call the edges $e_1<e_2$ \emph{consecutive} in an edge-ordered graph $G$ if no edge $e$ of $G$ satisfies $e_1<e<e_2$. An edge-ordered graph $G$ is a \emph{semi-caterpillar} if the underlying simple graph is a non-trivial tree and any pair of consecutive edges in $G$ are either adjacent in $G$ or they are directly connected by an edge larger than both of them. The \emph{reverse} $G^R$ of an edge-ordered graph $G$ is obtained from $G$ by keeping its underlying simple graph and reversing the edge-order. Note that the extremal functions of $G$ and $G^R$ coincide and so does their order chromatic numbers.

\begin{theorem}[Dichotomy]\label{dich}
	If $G$ or its reverse $G^R$ is a semi-caterpillar of order chromatic number 2, then
	$\ex_<(n,G)=O(n)$. For any other non-trivial connected edge-ordered graph $G$ we have $\ex_<(n,G)=\Omega(n\log n)$.
\end{theorem}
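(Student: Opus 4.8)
The plan is to prove the two halves of the dichotomy separately, and the structure is dictated by the terminology just introduced.

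\textbf{The linear upper bound.} Suppose $G$ (or $G^R$, which has the same extremal function) is a semi-caterpillar of order chromatic number $2$. I would first unpack what these two hypotheses give us. By Lemma~\ref{ocn2} (combined with the promised Lemma~\ref{semi-right}), order chromatic number $2$ means $G$ has a proper $2$-coloring, say with classes $A$ and $B$, in which every vertex of $B$ is close, i.e.\ the edges at each $b\in B$ form an interval in the edge-order. The semi-caterpillar condition controls \emph{consecutive} edges: any two edges that are consecutive in the global edge-order of $G$ are either adjacent in $G$ or joined by an edge larger than both. The strategy for the $O(n)$ bound is a greedy embedding / induction on $n$: given an $n$-vertex edge-ordered host $F$ with sufficiently many edges (say more than $cn$ for a constant $c=c(G)$), one should find a low-degree vertex, remove it, embed $G$ into the rest by induction, and then argue the removed vertex (or rather a small set near it) can be used to complete a copy of $G$ whose ``last'' edges sit in the right interval. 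The close class $B$ is what makes this work: vertices of $B$ only impose an interval constraint, so they can be matched greedily to low-degree vertices of $F$ once the $A$-side skeleton is in place; the semi-caterpillar condition is what lets the edge-order of the partial copy be extended consistently as we add edges one interval at a time. This is precisely the kind of argument F\"uredi, Kostochka, Mubayi and Verstra\"ete use for connected vertex-ordered and convex geometric graphs with linear extremal functions, and I expect the present proof to be an edge-ordered analogue of theirs.

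\textbf{The $\Omega(n\log n)$ lower bound.} Now suppose $G$ is connected and non-trivial but neither $G$ nor $G^R$ is a semi-caterpillar of order chromatic number $2$. There are two failure modes, and I would handle them by exhibiting, in each case, an $n$-vertex edge-ordered graph with $\Omega(n\log n)$ edges that avoids $G$.
\emph{Case 1: $\chi_<(G)\ge 3$ (for $G$ or it is not a forest).} If $G$ contains a cycle, then $G$ is not a forest and $\ex(n,G)=\Omega(n^{1+\varepsilon})$ already in the unordered sense by Observation~\ref{simple}, which is far more than $n\log n$; so assume $G$ is a forest — indeed a tree, since it is connected — and $\chi_<(G)\ge 3$. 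Then by definition of order chromatic number there exist edge-orderings of bipartite (even arbitrarily dense bipartite) graphs avoiding $G$; in particular the complete bipartite graph $K_{n/2,n/2}$ has an edge-ordering avoiding $G$, giving $\ex_<(n,G)=\Omega(n^2)$. So in this case the bound is immediate and overkill.
\emph{Case 2: $\chi_<(G)=2$ (so $G$ is a tree with the close-$2$-coloring of Lemma~\ref{ocn2}) but neither $G$ nor $G^R$ is a semi-caterpillar.} This is the heart of the lower bound. Here I would look for two consecutive edges $e_1<e_2$ of $G$ that are neither adjacent nor joined by a larger edge, and likewise for $G^R$; the obstruction to being a semi-caterpillar is some pair of consecutive edges that ``jump'' across the tree without a large connecting edge. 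The natural candidate construction achieving $\Theta(n\log n)$ is the one used in \cite{GMNPTV} for their $\Theta(n\log n)$ paths: a recursively/dyadically defined edge-ordering — split the vertex set in half, put all ``cross'' edges forming a sparse bipartite (e.g.\ Hamilton-path-like or matching-like) graph between the halves either entirely before or entirely after the edges inside the halves, and recurse. Such a construction has $\Theta(n\log n)$ edges, and one shows by induction on the depth of the recursion that it avoids $G$: any copy of $G$ in the host would have to use cross-edges at some level, and because the cross-edges at a level are all mutually non-adjacent and are all smaller (or all larger) than everything inside the parts, a copy of $G$ would force two of its own consecutive edges to be non-adjacent-and-not-connected-by-a-larger-edge — contradicting the semi-caterpillar structure of $G$, which we \emph{do not} have, but whose \emph{failure} we must exploit rather than its presence. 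Getting this right — translating ``not a semi-caterpillar'' and ``$G^R$ not a semi-caterpillar'' into the precise combinatorial feature that the dyadic construction forbids — is the main obstacle; one likely needs a careful case analysis on where in $G$'s edge-order the bad consecutive pair lies and how the close-$2$-coloring interacts with it, possibly distinguishing whether the smallest or largest edge of $G$ behaves well.

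\textbf{Main obstacle.} The genuinely delicate part is Case 2 of the lower bound: one must extract from the mere \emph{negation} of ``semi-caterpillar of order chromatic number $2$'' (for both $G$ and $G^R$) a concrete structural handle, and then design a single $\Omega(n\log n)$-edge edge-ordered graph — presumably a recursive dyadic construction — and verify it avoids $G$. The verification is an induction on the recursion depth where the semi-caterpillar \emph{failure} of $G$ is exactly what blocks a hypothetical embedding; making the induction hypothesis strong enough to carry through all levels, and organizing the case analysis (smallest edge, largest edge, the close color class, which of $G$ or $G^R$ fails and how) cleanly, is where the real work lies. The upper bound, by contrast, I expect to be a fairly standard greedy-embedding-with-induction argument once the roles of the close color class and the semi-caterpillar condition are correctly identified.
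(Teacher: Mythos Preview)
Your plan diverges from the paper on both halves, and on the upper bound the divergence is a genuine gap rather than just a different route.

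\textbf{Upper bound.} The greedy-embedding / induction-on-$n$ sketch is not how the paper proceeds, and there is good reason to think it would not close. The natural structural induction is on the right caterpillar $T$ via the ``extension'' operation of Lemma~\ref{equivalence}: pass from $T'$ to $T$ by attaching new smallest edges at \emph{both} ends of the smallest edge of $T'$. The paper explicitly notes (just before Lemma~\ref{add}) that when new edges go to only one end, the extremal function grows by an additive $O(n)$, but that this \emph{fails} for general extensions. So one cannot simply peel off the last extension step and induct. What the paper does instead is introduce the notion of a $c$-\emph{left-leaning} (resp.\ $c$-\emph{right-leaning}) edge in an edge-ordered bigraph and iterate: restrict to the left-leaning edges, then to the left-leaning edges of that, and so on, $i$ times (where $i$ is the recursive depth of $T$). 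Lemma~\ref{T} shows that if anything survives this $i$-fold restriction then the host contains $T$; Lemmas~\ref{2cn} and \ref{riglef} bound the number of edges lost along the way by $O(i^2cn)$. This ``leaning'' machinery is the key new idea, and your plan does not contain it or an equivalent.

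\textbf{Lower bound.} Your Case~1 is fine and matches the paper. For Case~2 your instinct --- exhibit an $\Omega(n\log n)$-edge construction and argue it avoids $G$ --- is correct in spirit, but the paper gets there without designing and analyzing a new dyadic construction. Instead it proves a forbidden-subpath characterization (Theorem~\ref{semi}): a connected order-chromatic-number-$2$ edge-ordered graph is a semi-caterpillar if and only if it avoids $P_4^{213}$, $P_5^{1342}$ and $P_5^{1432}$. Hence if $G$ is not a semi-caterpillar it contains one of these three paths; the latter two already have extremal function $\Omega(n\log n)$ by results in \cite{GMNPTV}. If $G$ contains $P_4^{213}$ and (since $G^R$ is also not a semi-caterpillar) $G$ also contains $P_4^{132}$, the paper argues via the close-coloring of Lemma~\ref{ocn2} that the two bipartitions of $G$ contain $P_4^{+132}$ and $P_4^{+213}$ respectively, and then invokes an existing construction (Lemma~4.11 of \cite{GMNPTV}) that avoids both. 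So rather than a bespoke induction on recursion depth, the lower bound is a short reduction to three or four previously analyzed small paths; the work you anticipated in your ``main obstacle'' is absorbed into Theorem~\ref{semi}.
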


Neither direction of the above dichotomy seems to follow from earlier results. For the linear bound for order chromatic number 2 semi-caterpillars we will develop and use a recursive process to obtain all these edge-ordered graphs (or rather their \emph{bipartitions}, see Lemma~\ref{equivalence}). For the superlinear lower bound we will show that all connected order-chromatic number 2 edge-ordered graphs that are \emph{not} semi-caterpillars contain certain edge-ordered paths (see Theorem~\ref{semi}) and then use existing results on the extremal functions of these edge-ordered paths.

An important tool we borrow from the paper~\cite{GMNPTV} is the bipartitions of edge-ordered graphs. An \emph{edge-ordered bigraph} $G$ is an edge-ordered graph $G_0$ together with a proper 2-coloring to \emph{left} and \emph{right} vertices, so each edge has a left end and a right end. We call $G_0$ the edge-ordered graph \emph{underlying} $G$ and we say that $G$ is a \emph{bipartition} of $G_0$. Note that we use many terms, like edge-ordered forest, edge-ordered tree, edge-ordered path in a simpler sense meaning an edge-ordered graph whose underlying simple graph is a forest, a tree, or a path, respectively. Edge-ordered bigraph (as defined above) is more than an edge-ordered bipartite graph (in the above sense) as left and right vertices are distinguished. The notions of \emph{isomorphism}, \emph{subgraph}, \emph{contain}, \emph{avoid} and \emph{close vertex} naturally extend to edge-ordered bigraphs. 

The paper \cite{GMNPTV} introduced bipartitions in order to break the symmetry. Using them one can distinguish the two ways a connected edge-ordered graph may be embedded in another edge-ordered graph if both underlying simple graphs happen to be bipartite: after making them into edge-ordered bigraphs by designating left and right vertices in both graphs either all left vertices map to left vertices and the mapping ensures containment between the edge-ordered bigraphs or all left vertices map to right vertices in which case it does not.\footnote{The paper \cite{GMNPTV} used the terms \emph{edge-ordered bipartite graph} instead of edge-ordered bigraph and the terms \emph{left-contain} and \emph{right-contain} for the two ways an edge-ordered bigraph can contain an edge-ordered path.}

We call an edge-ordered bigraph a \emph{right caterpillar} if its underlying edge-ordered graph is a semi-caterpillar and all its right vertices are close. See Figure~\ref{fig1} for an example of a right caterpillar. The following corollary is a direct consequence of Lemma~\ref{ocn2}.

\begin{corollary}\label{semi-right}
A semi-caterpillar has order chromatic number 2 if and only if one of its bipartitions is a right caterpillar.
\end{corollary}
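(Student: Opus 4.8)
The plan is to derive the corollary directly from Lemma~\ref{ocn2}, using the elementary observation that a connected graph --- in particular a non-trivial tree --- has a unique proper $2$-coloring up to exchanging the two colors. First I would fix a semi-caterpillar $G$; by definition its underlying simple graph is a non-trivial tree $T$. Since $T$ is connected and bipartite, it admits exactly two proper $2$-colorings, each obtained from the other by swapping the color classes, and these correspond precisely to the two bipartitions of $G$, a bipartition being nothing but a designation of one color class as ``left'' and the other as ``right''. So it suffices to match the condition in Lemma~\ref{ocn2} with the condition defining a right caterpillar.

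For the ``if'' direction, suppose one bipartition of $G$ is a right caterpillar. By definition all of its right vertices are close, hence the partition of $V(T)$ into left and right vertices is a proper $2$-coloring of the edge-ordered forest $G$ in which every vertex of one color class is close. By Lemma~\ref{ocn2} this gives $\chi_<(G)=2$.

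For the ``only if'' direction, suppose $\chi_<(G)=2$. Since $G$ is a non-trivial tree, it is in particular a non-trivial edge-ordered forest, so Lemma~\ref{ocn2} yields a proper $2$-coloring of $T$ with one color class $C$ consisting entirely of close vertices. By the uniqueness observation above this $2$-coloring is one of the two bipartitions of $G$; choose the one in which $C$ is the set of right vertices. As $G$ is a semi-caterpillar by hypothesis and every right vertex of this bipartition is close, the bipartition is a right caterpillar by definition, which completes the argument.

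There is essentially no obstacle here: the only point to keep straight is that, for a connected edge-ordered graph, ``proper $2$-coloring with one color class consisting of close vertices'' and ``bipartition all of whose right vertices are close'' express the very same thing, because connectivity of $T$ forces the proper $2$-coloring to be unique up to the left/right swap. It is also worth remarking that the semi-caterpillar hypothesis is used only to supply the first clause in the definition of a right caterpillar (that the underlying edge-ordered graph be a semi-caterpillar); it is irrelevant to the close-vertex condition, and indeed the statement would become false for general edge-ordered forests precisely because of that clause.
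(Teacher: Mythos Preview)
Your proof is correct and follows exactly the approach the paper intends: the paper merely asserts that the corollary is a direct consequence of Lemma~\ref{ocn2}, and you have spelled out that consequence carefully, using the uniqueness (up to swap) of the proper $2$-coloring of a connected bipartite graph to identify proper $2$-colorings with bipartitions.
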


\begin{figure}[h]
	\centering
	\fbox{\includegraphics[scale=1.2]{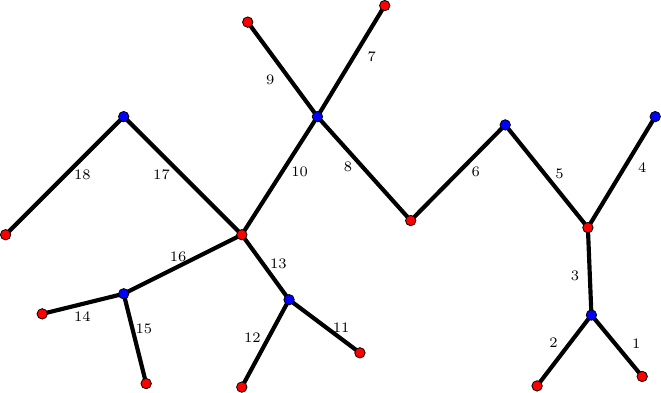}}
	\caption{A right caterpillar. Left vertices are red, right vertices are blue.}
	\label{fig1}
\end{figure}

In the next lemma, we will give an alternative definition for right caterpillars that is often easier to use. But we need a definition first:
Let $G$ be a non-trivial edge-ordered bigraph and let $e$ be the smallest edge in $G$. We call the edge-ordered bigraph $G'$ an \emph{extension} of $G$ if $G'$ is obtained from $G$ by adding new edges to it, such that
\begin{enumerate}
\item every new edge connects one end of $e$ to a new degree 1 vertex;
\item all new edges are smaller than the edge $e$;
\item all new edges incident to the left end of $e$ are smaller than any new edge incident to the right end of $e$.
\end{enumerate}
Let $T_0$ denote the unique (up to isomorphism) edge-ordered bigraph with a single edge and two vertices. 

\begin{lemma}\label{equivalence}
An edge-ordered bigraph is a right caterpillar if and only if it can be obtained (up to isomorphism) from $T_0$ by a sequence of extensions.
\end{lemma}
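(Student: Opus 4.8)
The plan is to prove both directions by induction, with the "only if" direction being the more delicate one. For the "if" direction, I would argue that each of the three conditions in the definition of an extension is designed exactly to preserve the two defining properties of a right caterpillar: that the underlying edge-ordered graph is a semi-caterpillar, and that all right vertices are close. Starting from $T_0$, which is trivially a right caterpillar, I would check that if $G'$ is an extension of a right caterpillar $G$ with smallest edge $e = uv$ (say $u$ the left end, $v$ the right end), then the new edges — all smaller than $e$, all hanging off $u$ or $v$ at new degree-1 vertices — keep every right vertex close. The only right vertex whose edge-set changes is $v$: its edges were an interval $[e, \dots]$ before, and the new edges incident to $v$ are all smaller than $e$ but (by condition 3) larger than all new edges at $u$, so the new edges at $v$ together with $e$ form an interval at the bottom of the order, hence $v$ stays close; the new degree-1 vertices are trivially close. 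For the semi-caterpillar property I would check consecutive pairs of edges in $G'$: a pair both among the new edges, or a new edge and $e$, are either adjacent or connected through $e$ which is larger than both (condition 2); a pair both in $G$ is handled by $G$ being a semi-caterpillar, noting that inserting new edges all below $e$ does not destroy consecutivity of old pairs except possibly the pair involving $e$ as the old minimum, which is now fine.

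For the "only if" direction, let $G$ be a right caterpillar that is not a single edge, and let $e = uv$ be its smallest edge with $u$ its left end and $v$ its right end. The key structural claim I would establish is: the edges incident to $v$ other than $e$, together with all edges incident to $u$ other than $e$, are precisely the edges of $G$ smaller than some threshold, and they all go to degree-1 vertices; moreover the ones at $u$ all precede the ones at $v$ in the order. Granting this, removing these edges (and the isolated... rather, now degree-1 endpoints they created) yields a smaller edge-ordered bigraph $G_0$ of which $G$ is an extension, and I would argue $G_0$ is still a right caterpillar, so induction finishes the proof. The real content is proving the structural claim. To see that the neighbors of $v$ other than $u$ have degree 1: suppose $w$ is such a neighbor with the edge $vw$, and $w$ has another edge $wx$. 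Since $v$ is close and $e = vu$ is the global minimum, the edges at $v$ form an interval starting at $e$; consider the edge $vw$ — one must show $wx$ cannot fit consistently, using that $w$ (if it is a right vertex, it would have to be close too, and its edges $vw, wx, \dots$ form an interval) — I would chase the consecutivity condition of the semi-caterpillar definition between $e$ and the next edge up, which must be adjacent to $e$ or connected to it by something larger, to force the local "broom" structure around $e$.

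The main obstacle I expect is precisely this last structural claim — showing that in a right caterpillar the bottom of the edge-order looks exactly like a layer of pendant edges at $u$ followed by a layer of pendant edges at $v$. The subtlety is coordinating three things simultaneously: closeness of $v$ (and of every other right vertex, which constrains what the far endpoints of these bottom edges can look like), the semi-caterpillar consecutivity condition walking up from the minimum edge $e$, and the left/right bipartition bookkeeping that dictates the order between the $u$-edges and the $v$-edges. I would handle it by inducting on the position in the edge-order: process edges from smallest to largest, maintaining the invariant that everything seen so far is a pendant edge at $u$ or at $v$, and show the first edge that is neither must already be "above" the threshold, i.e. the next edge to be added in an extension-building sequence. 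Once the claim is in hand, verifying that the reduced graph $G_0$ is again a right caterpillar (underlying graph still a semi-caterpillar, all right vertices still close) is routine, since deleting pendant edges cannot create a violation of closeness or of the consecutivity condition among the surviving edges.
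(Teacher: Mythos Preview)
Your ``if'' direction is essentially the paper's argument and is fine. The ``only if'' direction, however, has a real gap: the structural claim you want is false as stated. You let $e=uv$ be the \emph{smallest} edge of the right caterpillar $G$ and assert that the remaining edges incident to $u$ or $v$ are exactly the edges of $G$ below some threshold and all go to degree-$1$ vertices. Take $G=P_4^{+132}$: vertices $v_1,v_2,v_3,v_4$ along the path with $v_1,v_3$ right, and edges $v_1v_2,\,v_2v_3,\,v_3v_4$ labeled $1,3,2$. This is a right caterpillar (it is the extension of $T_0$ obtained by adding one new edge at each end). Here $e=v_1v_2$, $u=v_2$, $v=v_1$, and the only other edge at $u$ or $v$ is $v_2v_3$, labeled $3$. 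The set $\{v_2v_3\}$ is not the set of edges below any threshold (the edge labeled $2$ is smaller yet not in it), and $v_3$ has degree $2$, so $v_2v_3$ is not pendant; removing it disconnects $G$. So the layer you propose to peel is simply the wrong one.

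The paper's induction instead looks at the two smallest edges $e_1<e_2$ of $G$. If they are adjacent, one removes just $e_1$ and observes this reverses a one-edge extension. If they are not adjacent, the semi-caterpillar condition supplies a larger edge $e$ joining them; one removes \emph{all} edges below $e$. The key point is that the common vertex of $e$ and $e_1$ is not close (it sees $e_1<e$ but not $e_2$), hence is left, so the common vertex of $e$ and $e_2$ is right and therefore close, forcing every edge between $e_2$ and $e$ to be incident to it. This is what pins down the pivot $e$ as the minimum of the \emph{reduced} graph (not the minimum of $G$) and shows the removed edges satisfy the three extension conditions. Your plan never identifies this pivot, which is the missing idea.
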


\begin{proof}
We prove the ``if'' part of the lemma first. $T_0$ is clearly a right caterpillar. Now assume $T'$ is the extension of the right caterpillar $T$ and let $e$ denote the smallest edge of $T$. It is clear that the underlying simple graph of $T'$ is a tree. To see that it is a semi-caterpillar consider two consecutive edges of $T'$. If both are in $T$, then they are adjacent or connected by a larger edge because the edge-ordered graph underlying $T$ is a semi-caterpillar. If one of the two consecutive edges is in $T$, but the other is not, then the former must be the smallest edge in $T$ and therefore the latter must be adjacent to it. Finally, if both consecutive edges are outside $T$, then they are both adjacent to the smallest edge $e$ in $T$, therefore either adjacent to each other or  connected by a larger edge, namely $e$. To see that $T'$ is a right caterpillar we need to further ensure that all right vertices are close in $T'$. This property is inherited from $T$ for all its right vertices except perhaps for the right end $v$ of the smallest edge $e$ in $T$ because their neighborhoods did not change. For $v$ this neighborhood included the smallest edge $e$ in $T$ and now in $T'$ we added a few edges just below $e$, so $v$ also remains close in $T'$. All vertices in $T'$ outside $T$ have degree $1$, so they are also close.
 
Before moving to the ``only if'' part of the lemma, let us note that if one removes a few of the smallest edges of a right caterpillar $T$ (but not all edges) and also the vertices that became isolated, then the remaining edge-ordered bigraph $T'$ is also a right caterpillar. Indeed, any pair of consecutive edges in $T'$ are also consecutive in $T$ and so they are adjacent or connected by a larger edge in $T$ and therefore also in $T'$. Also, $T'$ must remain connected (and thus a tree) as otherwise there were a pair of consecutive edges in $T'$ in different components, failing this condition. Thus the underlying edge-ordered graph of $T'$ is a semi-caterpillar. The property that the right vertices are close in $T$ must hold for every subgraph of $T$, so in particularly for $T'$ too, making it a right caterpillar.

We now prove the ``only if'' part of the lemma by induction on the size of the right caterpillar. If it has a single edge, then it is isomorphic to $T_0$ itself. Therefore, let $T$ be a right caterpillar with at least two edges and let $e_1$ and $e_2$ be the smallest and second smallest among them, respectively.

First we consider the case that $e_1$ and $e_2$ are adjacent. Remove $e_1$ from $T$ together with the isolated vertex created and call the remaining right caterpillar $T'$. $T'$ can be obtained from $T_0$ by a sequence of extension by the inductive hypothesis. But $T$ is an extension of $T'$ (by a single edge), so $T$ can also be obtained from $T_0$ by a sequence of extensions.

Lastly, we consider the case when the consecutive edges $e_1$ and $e_2$ are not adjacent and let $e$ be the edge connecting them. Let us obtain $T'$ from $T$ by removing all edges smaller than $e$ (including both $e_1$ and $e_2$) and all resulting isolated vertices. We saw that $T'$ is a right caterpillar and thus it can be obtained from $T_0$ by a sequence of extensions. It remains to show that $T$ is an extension of $T'$.

The vertex where $e$ and $e_1$ meet is not close since $e_1<e_2<e$ and $e_2$ is not incident to it, so it must be a left vertex. This makes the vertex where $e$ and $e_2$ meet a right vertex and therefore close in $T$ and thus incident to all the edges between $e$ and $e_2$, that is, all edges outside $T'$ except for $e_1$. Note also that the ends of all removed edges not on $e$ must be degree $1$ vertices in $T$ because otherwise these edges would create a cycle with the tree $T'$. This makes $T$ an extension of $T'$ where we added a single new edge $e_1$ to the left end of the smallest edge $e$ in $T'$ and a few edges to its right end. This finishes the proof of the lemma.
\end{proof}

We call the minimum number of extension steps needed to obtain a right caterpillar $T$ from $T_0$ the \emph{recursive depth} of $T$. The recursive depth of the right caterpillar in Figure~\ref{fig1} is 9.

Note that extensions $G'$ of $G$ introducing a single new edge (or even several new edges all incident to the same end of the smallest edge in $G$) are easy to deal with. Lemma~\ref{add} below shows that in this case the extremal function of the underlying edge-ordered graph increases by additive linear term only. Unfortunately, this fails to hold for general extensions as defined above. To bound the extremal function, we need to deal with the entire sequence of extensions at once and for that we need the following definitions. 

Let $ G $ be an edge-ordered bigraph on $ n $ vertices and $c$ be a positive integer. We define an edge $ e $ of $ G $ to be \emph{$c$-left-leaning} if there exist sets $ S $ and $ S' $ of $ c $ edges each in $ G $ such that
\begin{itemize}	\itemsep-5pt 
	\item all edges in $ S $ are incident to the left end of $ e $,
	\item all edges in $ S' $ are incident to the right end of $ e $ and
	\item all edges in $ S' $ are larger than any edge in $ S $ but smaller than $ e $.
\end{itemize}

Similarly, we define $e$ to be \emph{$c$-right-leaning} in $G$, if size $c$ edge sets $S$ and $S'$ as above exist but now the edges in $S$ should be larger than the edges in $S'$ but still smaller than $e$. Note that an edge can be both $c$-left-leaning and $c$-right-leaning at the same time, just one of them or neither. We call an edge \emph{$c$-non-leaning} in $G$, if it is neither $c$-left-leaning nor $c$-right-leaning. See Figure~\ref{fig2} for an example.

\begin{figure}[h]
	\centering
	\fbox{\includegraphics[scale=1]{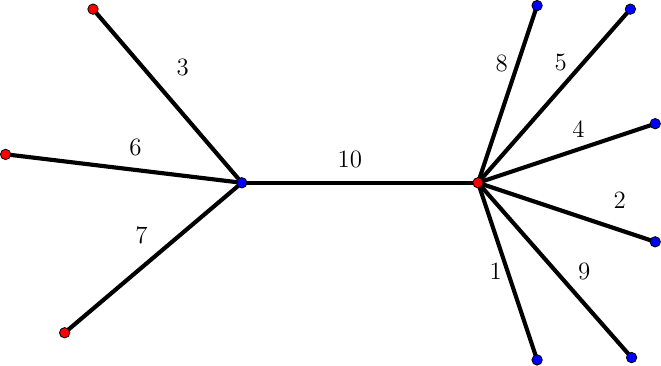}}
	\caption{The edge labeled 10 is a 2-left and 2-right-leaning edge; it is a 3-non-leaning edge.}
	\label{fig2}
\end{figure}

\begin{lemma}\label{2cn}
	For any $ c\ge1 $ any edge-ordered bigraph $ G $ on $ n $ vertices has at most $ 2 c n $ $c$-non-leaning edges.
\end{lemma}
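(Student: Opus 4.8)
The plan is to show that every $c$-non-leaning edge must be ``small'' at one of its two endpoints, and then to count such edges vertex by vertex.

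First I would fix a $c$-non-leaning edge $e$ with left end $u$ and right end $v$ and argue by contraposition. I claim that if $e$ is neither among the $2c$ smallest edges incident to $u$ nor among the $2c$ smallest edges incident to $v$, then $e$ is $c$-leaning (to the left or to the right). Under this assumption there are at least $2c$ edges smaller than $e$ at each of $u$ and $v$; write $\ell_1<\ell_2<\dots<\ell_{2c}$ for the $2c$ smallest edges at $u$ and $r_1<r_2<\dots<r_{2c}$ for the $2c$ smallest edges at $v$ (all of them smaller than $e$). The edges $\ell_c$ and $r_c$ are distinct, since they lie at different vertices, so one is strictly smaller; by symmetry suppose $\ell_c<r_c$. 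Then $S=\{\ell_1,\dots,\ell_c\}$ (incident to the left end of $e$) and $S'=\{r_c,r_{c+1},\dots,r_{2c-1}\}$ (incident to the right end of $e$) are two $c$-element edge sets, all of whose edges are smaller than $e$, and every edge of $S'$ is at least $r_c>\ell_c$, hence strictly larger than every edge of $S$; this exhibits $e$ as $c$-left-leaning. The case $r_c<\ell_c$ symmetrically exhibits $e$ as $c$-right-leaning. So every $c$-non-leaning edge is among the $2c$ smallest edges at its left end, or among the $2c$ smallest edges at its right end.

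To finish I would count edges with this property. For each left vertex $w$, at most $2c$ edges incident to $w$ rank among the $2c$ smallest at $w$, and each such edge has $w$ as its left end; hence at most $2c$ of the $c$-non-leaning edges can have $w$ as their left end, and summing over the left vertices bounds the number of $c$-non-leaning edges that are small at their left end by $2c$ times the number of left vertices. Symmetrically, the number that are small at their right end is at most $2c$ times the number of right vertices. Adding the two bounds and using that the left and right vertices partition the $n$-element vertex set gives the claimed bound $2cn$.

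I do not anticipate a genuine obstacle here; the proof is short. The one step that needs care is the extraction argument — verifying that having $2c$ edges below $e$ on each side really does force one of the two leaning configurations — which is exactly the short comparison of $\ell_c$ with $r_c$ above. (In fact $2c-1$ edges below $e$ on each side already suffice for that step, so the stated bound has a little slack, but $2c$ keeps the bookkeeping clean.)
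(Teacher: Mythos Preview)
Your proof is correct and follows essentially the same approach as the paper's: both show that a $c$-non-leaning edge must be among the few smallest edges at one of its endpoints by comparing the $c$-th smallest edge on each side, then count vertex by vertex. The paper uses $2c-1$ instead of $2c$ (as you yourself note at the end), and its counting sums over all $n$ vertices directly rather than splitting into left and right vertices, but these are cosmetic differences; one small wording slip in your counting paragraph (``at most $2c$ of the $c$-non-leaning edges can have $w$ as their left end'') is corrected by the very next clause and does not affect the argument.
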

\begin{proof}
	Consider an edge $ e $ that is not among the $2c-1$ smallest edges at either of its endpoints. We select $2c-1$ edges incident to the left end of $e$, all smaller than $e$. Let us call them $ e_1, e_2, \dots, e_{2c-1} $ in increasing order. Similarly, we find the edges $ e'_1, e'_2, \dots, e'_{2c-1} $, also in increasing order, all smaller than $e$, but incident to the right end of $e$.
	
	If $e_c<e'_c$, then we set $ S = {\{e_1, \dots, e_c}\} $, $ S' = {\{e'_c, \dots, e'_{2c-1}}\} $ showing that $ e $ is a $ c $-left-leaning edge. Whereas if $e_c>e'_c $, we have the edge sets $ S = {\{e_c, \dots, e_{2c-1}}\} $, $ S' = {\{e'_1, \dots, e'_{c}}\} $ showing that $ e $ is a $ c $-right-leaning edge.
	
	Thus, all $c$-non-leaning edges in $G$ must be among the $2c-1$ smallest edges incident to some vertex of $G$. This proves the statement of the lemma.
\end{proof}

Let us fix $ c $ and define two sequences of subgraphs for an edge-ordered bigraph $ G $ starting at $ G^{c\textrm{-left}}_0 = G^{c\textrm{-right}}_0 = G $. For $i>0$ we define $ G^{c\textrm{-left}}_i $ as the subgraph of $ G^{c\textrm{-left}}_{i-1} $ consisting of its $c$-left-leaning edges, whereas $ G^{c\textrm{-right}}_i $ is the subgraph of $ G^{c\textrm{-right}}_{i-1} $ consisting of its $ c $-right-leaning edges. All of these are subgraphs of $G$ and contain all vertices of $G$. Notice that if $H$ is a subgraph of $G$, then $c$-left-leaning (respectively, $c$-right-leaning) edges of $H$ are also $c$-left-leaning (respectively, $c$-right-leaning) in $G$, therefore we also have that $G^{c\textrm{-left}}_i$ contains $H^{c\textrm{-left}}_i$ and $G^{c\textrm{-right}}_i$ contains $H^{c\textrm{-right}}_i$ for any $i$. 

\begin{lemma}\label{riglef}
    Let $c,i\ge1$ and let $ G $ be an edge-ordered bigraph on $n$ vertices. There are at most $2i^2cn$ edges in $G$ neither contained in $ G^{c\textrm{\rm-left}}_i $ nor in $ G^{c\textrm{\rm-right}}_i $.
\end{lemma}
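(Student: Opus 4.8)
The plan is to exploit the nested structure of the two decreasing sequences $G=G^{c\textrm{-left}}_0\supseteq G^{c\textrm{-left}}_1\supseteq\cdots$ and $G=G^{c\textrm{-right}}_0\supseteq G^{c\textrm{-right}}_1\supseteq\cdots$ and reduce the whole statement to $i^2$ applications of Lemma~\ref{2cn}. For $1\le j\le i$ write $D_j=E(G^{c\textrm{-left}}_{j-1})\setminus E(G^{c\textrm{-left}}_{j})$ for the set of edges deleted at the $j$-th left step (equivalently, the edges of $G^{c\textrm{-left}}_{j-1}$ that are \emph{not} $c$-left-leaning in $G^{c\textrm{-left}}_{j-1}$), and similarly set $D'_k=E(G^{c\textrm{-right}}_{k-1})\setminus E(G^{c\textrm{-right}}_{k})$. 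Because the two sequences are nested, the edges of $G$ not in $G^{c\textrm{-left}}_i$ are exactly $\bigcup_{j=1}^i D_j$ and the edges not in $G^{c\textrm{-right}}_i$ are exactly $\bigcup_{k=1}^i D'_k$, so the edge set the lemma asks us to bound is $\big(\bigcup_{j=1}^i D_j\big)\cap\big(\bigcup_{k=1}^i D'_k\big)=\bigcup_{j=1}^i\bigcup_{k=1}^i (D_j\cap D'_k)$. By the union bound it therefore suffices to show $|D_j\cap D'_k|\le 2cn$ for every fixed pair $(j,k)$ with $1\le j,k\le i$; adding up these $i^2$ inequalities yields the claimed bound $2i^2cn$.

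To estimate a single $|D_j\cap D'_k|$, I would pass to the edge-ordered bigraph $H$ on the same $n$ vertices as $G$, with the inherited bipartition and edge-order, whose edge set is $E(G^{c\textrm{-left}}_{j-1})\cap E(G^{c\textrm{-right}}_{k-1})$; every edge of $D_j\cap D'_k$ is an edge of $H$. The claim is that each such edge $e$ is $c$-non-leaning in $H$, after which Lemma~\ref{2cn} applied to $H$ finishes the job. For the claim: $e\in D_j$ means $e$ is not $c$-left-leaning in $G^{c\textrm{-left}}_{j-1}$, and since $H$ is a subgraph of $G^{c\textrm{-left}}_{j-1}$ on the same vertex set, the monotonicity already recorded in the text (a leaning edge of a subgraph is leaning in the supergraph), read contrapositively, shows $e$ is not $c$-left-leaning in $H$ either; symmetrically, $e\in D'_k$ shows $e$ is not $c$-right-leaning in $H$. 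Hence $D_j\cap D'_k$ is contained in the set of $c$-non-leaning edges of $H$, which has size at most $2cn$ by Lemma~\ref{2cn}.

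The one point that needs care — and the reason one cannot simply apply Lemma~\ref{2cn} to $G^{c\textrm{-left}}_{j-1}$ or to $G^{c\textrm{-right}}_{k-1}$ — is that the two defining conditions of $D_j\cap D'_k$ (not $c$-left-leaning and not $c$-right-leaning) are tested in two different graphs, neither of which need be a subgraph of the other. Forming their common subgraph $H$ is exactly the device that lets both conditions be witnessed inside a single graph, and the subgraph-monotonicity of the leaning property guarantees that shrinking down to $H$ can only preserve ``non-leaning''. Everything else is routine bookkeeping; the quadratic factor $i^2$ is just the cost of the crude union bound over all pairs $(j,k)$, which is lossy but already sufficient for the stated estimate.
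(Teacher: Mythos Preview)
Your proof is correct and follows the same overall strategy as the paper: both arguments assign to each bad edge a pair of indices recording when it drops out of the left and right chains, then exhibit an auxiliary bigraph in which that edge is $c$-non-leaning so that Lemma~\ref{2cn} can be invoked $i^2$ times. The one place you diverge is in the choice of auxiliary graph. You take the symmetric intersection $H=G^{c\textrm{-left}}_{j-1}\cap G^{c\textrm{-right}}_{k-1}$; the paper instead uses the iterated construction $(G^{c\textrm{-left}}_{j})^{c\textrm{-right}}_{k}$ (their indices are shifted by one relative to yours), which requires the extra observation that $(G^{c\textrm{-left}}_{j})^{c\textrm{-right}}_{i}\subseteq G^{c\textrm{-right}}_{i}$ in order to locate the second drop-off index. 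Your version needs only the contrapositive of monotonicity applied once on each side, so it is arguably a touch cleaner; the paper's version, on the other hand, makes it more transparent that the $i^2$ auxiliary graphs are themselves nested. Either way the bookkeeping and the final $2i^2cn$ bound come out the same.
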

\begin{proof}
Consider an edge $e$ of $G$ contained in neither $ G^{c\textrm{-left}}_i $ nor $ G^{c\textrm{-right}}_i $. As $e$ is contained in $G^{c\textrm{-left}}_0=G$ we can find a value $0\le j<i$ such that $e$ is contained in $ G^{c\textrm{-left}}_j $ but not in $ G^{c\textrm{-left}}_{j+1} $. Note that this means that $e$ is not $c$-left-leaning in $G^{c\textrm{-left}}_j$. The graph $(G^{c\textrm{-left}}_j)^{c\textrm{-right}}_i$ is a subgraph of $G^{c\textrm{-right}}_i$, so it does not contain $e$. Therefore, we can find $0\le k<i$ such that $(G^{c\textrm{-left}}_j)^{c\textrm{-right}}_k$ contains $e$ but $(G^{c\textrm{-left}}_j)^{c\textrm{-right}}_{k+1}$ does not. This means that the edge $e$ is not $c$-right-leaning in $(G^{c\textrm{-left}}_j)^{c\textrm{-right}}_k$. It is neither $c$-left-leaning there as it is not $c$-left-leaning in the larger graph $G^{c\textrm{-left}}_j$. So $e$ must be $c$-non-leaning in $(G^{c\textrm{-left}}_j)^{c\textrm{-right}}_k$.

We have just shown that any edge of $G$ not contained in either $ G^{c\textrm{-left}}_i $ or $ G^{c\textrm{-right}}_i $ is a $c$-non-leaning edge of $(G^{c\textrm{-left}}_j)^{c\textrm{-right}}_k$ for some $0\le j,k<i$. By Lemma~\ref{2cn}, we have at most $ 2cn $ $c$-non-leaning edges in any one of these $i^2$ graphs. This proves the lemma.
\end{proof}
\begin{lemma}\label{T}
	Let $ T$ be a right caterpillar with $c$ vertices and recursive depth $ i $ and let $G$ be an edge-ordered bigraph. If $ G^{c\textrm{-left}}_i $ contains an edge, then $ G $ contains $ T $.
\end{lemma}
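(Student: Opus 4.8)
The plan is to embed a copy of $T$ into $G$ directly, building it up one extension‑layer at a time, mirroring the sequence of extensions by which $T$ is obtained from $T_0$ (Lemma~\ref{equivalence}). The argument is an induction on the layer index maintaining a single invariant about how far down the edge‑order the partial copy already reaches.

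Set‑up. Use Lemma~\ref{equivalence} to fix a sequence $T_0=T^{(0)},T^{(1)},\dots,T^{(i)}=T$ with each $T^{(j)}$ an extension of $T^{(j-1)}$, and put $c_j=|V(T^{(j)})|$, so $c_0=2$, $c_i=c$, and $c_{j-1}<c$ for every $j\le i$ since each extension adds at least one vertex. Because an extension only adds edges below the current smallest edge, the unique edge $e_0$ of $T^{(0)}$ is the \emph{largest} edge of $T$, and for $j\ge1$ the smallest edge $e_j$ of $T^{(j)}$ is one of the pendant edges created in the $j$-th extension, attached to one end of $e_{j-1}$. Two structural facts will be used repeatedly: all edges of the $j$-th extension are smaller than $e_{j-1}$ (hence smaller than every edge created in earlier extensions), and within the $j$-th extension the pendants at the left end of $e_{j-1}$ all precede the pendants at its right end.

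The construction. Fix an edge $e\in G^{c\textrm{-left}}_i$. Place the edges of $T$ in the order $e_0$, then the edges of the first extension, then those of the second, and so on, maintaining the invariant that after $T^{(j)}$ has been placed, the image $g_j$ of $e_j$ lies in $G^{c\textrm{-left}}_{i-j}$. For $j=0$ this is the hypothesis, setting $g_0:=e$. For the step, assume $T^{(j-1)}$ is placed and $g_{j-1}\in G^{c\textrm{-left}}_{i-j+1}$; then $g_{j-1}$ is $c$-left-leaning in $G^{c\textrm{-left}}_{i-j}$, so there are $c$ edges at the left end of $g_{j-1}$ and $c$ edges at its right end, all lying in $G^{c\textrm{-left}}_{i-j}$, all smaller than $g_{j-1}$, and all of the former smaller than all of the latter. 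Route the $j$-th extension into this configuration: the pendants at the left end of $e_{j-1}$ are mapped order‑preservingly to edges chosen among the $c$ edges at the left end of $g_{j-1}$, similarly on the right, and $e_j$ is sent to the smallest chosen edge. The only constraint is that a chosen edge must end at a vertex outside the current copy; since that copy uses fewer than $c$ vertices while each side supplies $c$ candidate edges ending at distinct vertices, a counting argument always leaves enough admissible choices. As every chosen edge lies in $G^{c\textrm{-left}}_{i-j}$, the invariant is restored. After the $i$-th extension the placed edges form a subgraph of $G$ isomorphic to $T$; checking this amounts to verifying, layer by layer, that left/right incidences agree and that all edge‑order comparisons match $T$, which follows from the two structural facts together with the defining inequalities of $c$-left-leaning edges. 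The base case $i=0$ is immediate: then $T=T_0$, $c=2$, and any edge of $G^{2\textrm{-left}}_0=G$ with its two ends is a copy of $T_0$.

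The genuine point requiring care is the counting step — ensuring that the $c$ witness edges furnished at each layer really outnumber the vertices of $T$ already used, which is exactly why the leaning parameter is taken to be $c=|V(T)|$ — and, relatedly, keeping track of which endpoint of each edge plays the left role so that the embedding respects the bipartition; the remainder is bookkeeping.
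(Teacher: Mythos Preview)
Your proof is correct and follows essentially the same approach as the paper's: both argue by induction on the recursive depth, using the $c$-left-leaning property of the current smallest edge to realize the next extension layer while a counting argument (comparing $c$ witness edges against fewer than $c$ already-used vertices) guarantees the new pendant endpoints can be chosen distinct and outside the partial copy. The only cosmetic difference is that the paper invokes the inductive hypothesis for $T'$ with its own parameter $c'<c$ (and then uses the monotonicity $(G^{c\text{-left}}_1)^{c'\text{-left}}_{i-1}\supseteq G^{c\text{-left}}_i$), whereas you unroll the induction and keep the single parameter $c$ fixed throughout via your invariant $g_j\in G^{c\text{-left}}_{i-j}$.
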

\begin{proof}
We prove this lemma by induction on $i$, which is the recursive depth of $T$. For $ i = 0 $, that is, when $ T=T_0$, the statement holds trivially. Assume $i\ge1$ and let $T$ be an extension of the right caterpillar $T'$ of recursive depth $i-1$. Let $c'$ be the number of vertices in $T'$. We apply the inductive hypothesis to $T'$ and $G^{c\textrm{-left}}_1$: if $(G^{c\textrm{-left}}_1)^{c'\textrm{-left}}_{i-1}$ contains an edge, then $G^{c\textrm{-left}}_1$ contains $T'$. We clearly have $c'<c$, therefore $(G^{c\textrm{-left}}_1)^{c'\textrm{-left}}_{i-1}$ contains $(G^{c\textrm{-left}}_1)^{c\textrm{-left}}_{i-1}=G^{c\textrm{-left}}_i$. We assume that $ G^{c\textrm{-left}}_i $ contains an edge, so by the inductive hypothesis, a subgraph $T''$ of $G^{c\textrm{-left}}_1$ is isomorphic to $T'$. 

Let $e$ be the smallest edge in $T''$. As $e$ is $c$-left-leaning in $G$, we have $2c$ smaller edges in $G$ with $c$ of them incident to right end of $e$ and $c$ even smaller ones incident to the left end of $e$. We find an isomorphic copy of $T$ in $G$ by adding the appropriate number of these smaller edges to $T''$. The only thing we have to make sure is that all the other ends of the newly added edges should be distinct and outside $T''$. This is doable because we have $c$ smaller edges to choose from at either end of $e$ and we have to avoid less than $c'$ of them that have their other end in $T''$ while we need only $c-c'$ new edges in total. 
\end{proof}

By Lemma~\ref{semi-right} and the fact that the extremal function of an edge-ordered graph and its reverse coincide, the following lemma is equivalent to the linear upper bound stated in Theorem~\ref{dich}.

\begin{lemma}\label{lin}
	We have $ \ex_<(n, T^*) = O(n) $ for the underlying edge-ordered tree $T^*$ of any right caterpillar $T$. 
\end{lemma}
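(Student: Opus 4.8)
The plan is to deduce the lemma from Lemmas~\ref{riglef} and~\ref{T} after a standard reduction to edge-ordered bigraphs. Fix a right caterpillar $T$ with $c := |V(T)|$ vertices and recursive depth $i$; both $c$ and $i$ are constants depending only on $T$. Let $G_0$ be an $n$-vertex edge-ordered graph avoiding $T^*$ with $|E(G_0)| = \ex_<(n, T^*)$. Since every graph has a bipartite subgraph containing at least half of its edges, choose such a subgraph $G_1 \subseteq G_0$; it still avoids $T^*$. Fix an arbitrary proper $2$-coloring of $G_1$ (chosen independently in each connected component) to turn it into an edge-ordered bigraph $G$ on the same vertex set.

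The first point to check is that $G$ avoids both $T$ and its \emph{mirror image} $\overline{T}$, the bigraph obtained from $T$ by interchanging the roles of left and right vertices. Indeed $T^*$ is a tree and hence connected, so any copy of $T^*$ inside $G_1$ is contained in a single component and inherits from the coloring of $G$ one of the two bipartitions of $T^*$, which are precisely $T$ and $\overline{T}$; conversely any copy of $T$ or of $\overline{T}$ inside $G$ gives a copy of $T^*$ in $G_1$. Thus $G$ avoiding both $T$ and $\overline{T}$ is equivalent to $G_1$ avoiding $T^*$, which we have arranged.

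Now apply the contrapositive of Lemma~\ref{T}: as $G$ avoids $T$, the subgraph $G^{c\textrm{-left}}_i$ has no edge. For the mirror image, observe that an edge is $c$-left-leaning in the mirror bigraph $\overline{G}$ exactly when it is $c$-right-leaning in $G$, so $\overline{G}^{\,c\textrm{-left}}_i$ and $G^{c\textrm{-right}}_i$ are the same subgraph; since $\overline{G}$ avoids $T$ (because $G$ avoids $\overline{T}$) and $T$ is a right caterpillar with the same parameters $c$ and $i$, Lemma~\ref{T} applied to $\overline{G}$ shows that $G^{c\textrm{-right}}_i$ has no edge either. Therefore every edge of $G$ lies in neither $G^{c\textrm{-left}}_i$ nor $G^{c\textrm{-right}}_i$, so by Lemma~\ref{riglef} the graph $G$ has at most $2i^2cn$ edges. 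Consequently $\ex_<(n, T^*) = |E(G_0)| \le 2|E(G_1)| = 2|E(G)| \le 4 i^2 c n = O(n)$.

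All the individual steps are short once the machinery of the preceding lemmas is in place; the only delicate points are the bipartite-to-bigraph bookkeeping — in particular recording the trivial left–right symmetry that upgrades Lemma~\ref{T} to a statement about $G^{c\textrm{-right}}_i$ and $\overline{T}$, and checking that forbidding the edge-ordered tree $T^*$ in $G_1$ really is the same as forbidding both of its bipartitions once $G_1$ is regarded as a bigraph. I would also note explicitly that $T$ being non-trivial is what makes ``contains an edge'' the right hypothesis for Lemma~\ref{T}, and that $c$ and $i$ do not depend on $n$, so the resulting bound is genuinely linear.
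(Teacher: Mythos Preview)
Your proof is correct and follows essentially the same approach as the paper: pass to a bipartite subgraph with at least half the edges, make it a bigraph, use Lemma~\ref{T} to force $G^{c\textrm{-left}}_i$ to be edgeless, use left--right symmetry (which you implement via the mirror bigraph $\overline{G}$ where the paper just says ``by left-right symmetry'') to force $G^{c\textrm{-right}}_i$ to be edgeless as well, and conclude via Lemma~\ref{riglef}. Your version is somewhat more explicit about the symmetry step and the bigraph bookkeeping, but the argument and the final bound $4i^2cn$ are identical.
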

\begin{proof}
	Let $ G $ be an edge-ordered graph on $ n $ vertices having the maximal number of edges $ m = \ex_<(n, T^*) $ that avoids $ T^* $. Consider an edge-ordered bipartite subgraph $ G' $ of $ G $ having at least $ m/2 $ edges and make it an edge-ordered bigraph by appropriately designating left and right vertices. Since $ G $ avoids $ T^* $, $ G' $ avoids $ T $.
	
	Let $c$ be the number of vertices in $T$ and let $i$ be the recursive depth of $T$. Consider $ G'^{c\textrm{-left}}_i $. It must be empty as otherwise $G'$ contains $T$ by Lemma~\ref{T}, a contradiction. Now consider $ G'^{c\textrm{-right}}_i $. By left-right symmetry, if this subgraph of $G' $ is non-empty, then $G'$ contains $ T' $ the edge-ordered bigraph obtained from $T$ by switching the roles of the left and right vertices. But $T'$ is also a bipartition of $T^*$, so this also implies that $G$ contains $T$, another contradiction. Therefore, neither $ G'^{c\textrm{-left}}_i $ nor $ G'^{c\textrm{-right}}_i $ contains any edges. But then $G'$ has at most $2i^2cn$ edges by Lemma~\ref{riglef}. This shows $\textrm{ex}_<(n, T^*)=m\le4i^2cn=O(n)$ as claimed.
\end{proof}

Let us now turn to the other side of the dichotomy stated in Theorem~\ref{dich}. Firstly, we characterize semi-caterpillars in terms of avoiding certain edge-ordered paths. We denote the simple path on $k$ vertices by $P_k$ and denote the edge-ordered path obtained by labeling $ P_k$ by listing the labels along the path in the upper index. So, for example, $ P_4^{213} $ mentioned in the theorem below is an edge-ordered 3-edge path whose middle edge is the smallest.

\begin{theorem}\label{semi}
 An order-chromatic number $2$ edge-ordered graph is a semi-caterpillar if and only if it is connected and does not contain any of the edge-ordered paths $ P_4^{213} $, $ P_5^{1342} $, or $ P_5^{1432} $.
\end{theorem}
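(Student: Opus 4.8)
The plan is to prove both directions via the structural definition of semi-caterpillar. The ``only if'' direction is the easy one: I must check that a semi-caterpillar of order chromatic number $2$ is connected and avoids each of the three forbidden edge-ordered paths. Connectedness is immediate since the underlying simple graph is a tree. For the avoidance, I would use the characterization of order chromatic number $2$ from Corollary~\ref{semi-right}: one bipartition is a right caterpillar, so all right vertices are close. Then I would inspect each of $P_4^{213}$, $P_5^{1342}$, $P_5^{1432}$ and show directly that it cannot sit inside a right caterpillar. For instance, in $P_4^{213}$ the middle edge (labeled $1$) is the smallest, and its two endpoints each have a second incident edge (labeled $2$ and $3$) that are \emph{larger}; neither endpoint's incident edges form an interval in $\{1,2,3\}$ because the other endpoint's edge separates them — so neither endpoint can be a right (close) vertex, contradicting properness of the $2$-coloring. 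Similar small case checks handle the two $5$-edge paths; here I would use that consecutive edges in a semi-caterpillar are adjacent or connected by a larger edge, which rules out the specific gaps these paths create.

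The ``if'' direction is the substantive one. I am given a connected, order-chromatic-number-$2$ edge-ordered graph $H$ avoiding all three paths, and I must show $H$ is a semi-caterpillar, i.e., (a) the underlying simple graph is a tree and (b) any two consecutive edges are adjacent or joined by a larger edge. Since $\chi_<(H)=2<\infty$ implies $H$ is a forest (an edge-ordered graph containing a cycle has unbounded order chromatic number, as any bipartite graph has an edge-ordering avoiding it — this should follow from the machinery of \cite{GMNPTV}; at worst one notes a triangle-free $H$ with a cycle still forces high chromatic number because forbidding a forest is necessary for finite $\chi_<$), and $H$ is connected, the underlying graph is a tree, giving (a). The heart of the matter is (b): suppose $e_1 < e_2$ are consecutive edges that are neither adjacent nor connected by an edge larger than both. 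Since $H$ is a tree, there is a unique path $\pi$ in $H$ joining $e_1$ to $e_2$.

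The key step is to extract one of the three forbidden paths from the configuration around $\pi$. I would argue by analyzing $\pi$'s length and the labels along it. Because $e_1, e_2$ are consecutive, every edge of $H$ strictly between them in the order does not exist, so every edge on $\pi$ other than $e_1,e_2$ is either $< e_1$ or $> e_2$; the hypothesis that $\pi$ is not a single connecting edge larger than both means we cannot have $\pi$ consist only of $e_1$, one larger edge, and $e_2$. A careful case split on how short $\pi$ can be — combined with using the order-chromatic-number-$2$ hypothesis (via Lemma~\ref{ocn2}/Corollary~\ref{semi-right}, the close-vertex structure of the good bipartition) to control which vertices along $\pi$ are close — should force a subpath isomorphic to $P_4^{213}$, $P_5^{1342}$, or $P_5^{1432}$. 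Roughly: if $\pi$ has length $2$ with connecting edge $< e_1$ we get $P_4^{213}$ up to reversal; if length $3$ or $4$, the possible orderings of the interior edges (each $<e_1$ or $>e_2$) combined with the close-vertex constraint collapse to the two $5$-edge patterns.

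The main obstacle I anticipate is the case analysis in this last step: showing that \emph{no} admissible labeling of $\pi$ escapes all three forbidden patterns, and in particular bounding the length of $\pi$ one must consider (one expects that a violating pair $e_1,e_2$ with a long connecting path already contains a short violating pair, so it suffices to treat $\pi$ of length at most $3$ or $4$, but making this reduction clean — rather than enumerating arbitrarily long paths — is the delicate part). Using the order-chromatic-number-$2$ hypothesis essentially everywhere to prune cases, rather than treating it as an afterthought, will be what makes the enumeration manageable.
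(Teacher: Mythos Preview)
Your proposal has the right skeleton but contains a concrete error and is missing the key simplifying observation.

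For the ``only if'' direction, your close-vertex argument for $P_4^{213}$ is wrong: the endpoint of the middle edge that meets the edge labeled $2$ has incident-edge set $\{1,2\}$, which \emph{is} an interval, so that vertex is close. In fact $P_4^{213}$ admits a bipartition with all right vertices close; what fails is the semi-caterpillar condition itself (the edges labeled $2$ and $3$ are consecutive and non-adjacent, and the only edge joining them is the smaller edge labeled $1$). The paper's argument for $P_4^{213}$ uses exactly this: removing the middle edge of a copy of $P_4^{213}$ from the ambient tree leaves two components each containing an edge larger than it, whence some consecutive pair of larger edges lies in different components and can be neither adjacent nor joined by a larger edge. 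For $P_5^{1342}$ and $P_5^{1432}$ the paper actually takes the indirect route via extremal functions (linear for order-chromatic-number-$2$ semi-caterpillars by the already-proved Lemma~\ref{lin}, but $\Omega(n\log n)$ for these two paths by Theorem~\ref{34edgepath}), though it remarks that a direct argument is also possible.

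For the ``if'' direction, you are missing the observation that makes the argument clean and eliminates your ``delicate'' length reduction entirely: avoidance of $P_4^{213}$ alone forces that no \emph{intermediate} edge of the connecting path $\pi$ is smaller than both of its neighbours on $\pi$. This single fact makes the label sequence along $\pi$ unimodal, so the minimum sits at an endpoint (namely $e_1$) and the two largest edges $e_3<e_4$ on $\pi$ must be adjacent on $\pi$. If both $e_3$ and $e_4$ are intermediate, then together with their two outer neighbours on $\pi$ (each smaller than $e_3$) they form a copy of $P_5^{1342}$ or $P_5^{1432}$; otherwise one checks immediately that $\pi$ has length at most two with the connecting edge (if any) equal to $e_4>e_2$. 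No induction or case split on the length of $\pi$ is needed, and the close-vertex bipartition from Lemma~\ref{ocn2} is never invoked in this direction: the hypothesis $\chi_<(G)=2$ is used only to exclude triangles (so that, together with $P_4^{213}$-avoidance, $G$ is acyclic).
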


\begin{proof} A semi-caterpillar is an edge-ordered tree, so it is connected. If an edge-ordered tree contains $P_4^{213}$, then removing the middle edge $e$ of this path, both resulting connected components contain an edge larger than $e$, so there must be a pair of consecutive edges $e_1<e_2$, both larger than $e$ and in distinct components thus violating the definition of a semi-caterpillar. To finish the proof of the ``only if'' direction of the theorem we need to show that order chromatic number 2 semi-caterpillars contain neither $P_5^{1342}$ nor $P_5^{1432}$. It is not hard to give a direct argument for this but at this point the simplest proof is to refer to the (already proven) half of Theorem~\ref{dich} that the extremal functions of order chromatic number 2 semi-caterpillars are linear and the result from \cite{GMNPTV} that these two edge-ordered paths have super-linear extremal functions (see Theorem~\ref{34edgepath} below), so they cannot be contained in order chromatic number 2 semi-caterpillars.

We now turn to the ``if'' part of theorem and consider a connected edge-ordered graph $G$ of order chromatic number 2 avoiding the three edge-ordered paths listed in the statement of the theorem. Note first that just order chromatic number 2 and avoiding $P_4^{213}$ already imply that $G$ is cycle-free. Indeed, if $G$ contained a cycle, it could not be a triangle because the order chromatic number of an edge-ordered triangle is three, and thus the subgraph formed by the smallest edge of the cycle and its two neighboring edges would be isomorphic to $P_4^{213}$.

Let $e_1<e_2$ be two consecutive edges of $G$. To finish the proof we need to show that they are adjacent or connected by a single edge larger than them. As $G$ is an edge-ordered tree, there is a unique path $P$ connecting $e_1$ and $e_2$ in $G$. Here $e_1$ and $e_2$ are the first and last edges of $P$. No intermediate edge $e$ of this path can be smaller than its two neighbors on $P$ as otherwise these three edges would form a subgraph isomorphic to $P_4^{213}$, a contradiction. This implies that smallest edge of $P$ cannot be an intermediate edge, so $e_1$ is the smallest edge on $P$. Consider the two largest edges $e_3<e_4$ of $P$. They must be adjacent as otherwise the smallest edge between them would violate the observation above. If $e_3$ and $e_4$ are adjacent intermediate edges they, together with their two adjacent edges on $P$ would form a subgraph of $G$ isomorphic to either $P_5^{1342}$ or $P_5^{1432}$, also a contradiction.
	
	There are only two remaining possibilities for the path $P$: either $e_1$ and $e_2$ are adjacent or they are connected by a single edge $e_4>e_2$ (making $e_2=e_3$). This shows that $G$ is a semi-caterpillar as claimed and finishes the proof of the theorem.
\end{proof}

Recall that we simply denote the edge-ordered path $ P_k^L $ by listing the labels along the path in the upper index. We need one last piece of notation for the proof of Theorem~\ref{dich}: to denote a bipartition of an edge-ordered path we simply add a sign $+$ or $-$ in front of the upper index to signify that the path starts at a right or a left vertex, respectively. So $P_4^{+132}$ and $P_4^{-132}$ are the two bipartitions of the edge-ordered path $P_4^{132}$.

\begin{proof}[Proof of Theorem~\ref{dich}.]
As we have remarked above, Lemmas~\ref{semi-right} and \ref{lin} imply the linear bound for the extremal functions of connected order chromatic number 2 semi-caterpillars and their reverses. It remains to prove that if a non-trivial connected edge-ordered graph $G$ does not fall into either of these two categories, then $\ex_<(n,G)=\Omega(n\log n)$. Let $G$ be such an edge-ordered graph.

By the non-triviality, the order-chromatic number of $G$ is not $1$, so it is either 2 or we have $\ex_<(n,G)=\Theta(n^2)=\Omega(n\log n)$ by Theorem~2.3 in \cite{GMNPTV} (the Erd\H os-Stone-Simonovits theorem for edge-ordered graphs). We will therefore assume that the order chromatic number of $G$ is 2.

By Theorem~\ref{semi}, as the non-trivial, connected, order chromatic number 2 edge-ordered graph $G$ is not a semi-caterpillar, it contains one of the edge-ordered paths $ P_4^{213} $, $ P_5^{1342} $, or $ P_5^{1432} $. In the latter two cases we have $\ex_<(n,G)=\Omega(n\log n)$ since $\ex_<(n,P_5^{1432})=\Theta(n\log n)$ and $\ex_<(n,P_5^{1342})=\Omega(n\log n)$ were proved in Theorems~4.10 and 4.9 of \cite{GMNPTV} (see Theorem~\ref{34edgepath} below). We will therefore assumes that $G$ contains $P_4^{213}$.

The order chromatic number of the reverse $G^R$ of $G$ is also 2 and $G^R$ is not a semi-caterpillar either, so (again by Theorem~\ref{semi}) $G^R$ contains one of the edge-ordered paths $ P_4^{213} $, $ P_5^{1342} $, or $ P_5^{1432} $. In the latter two cases $\ex_<(n,G)=\ex_<(n,G^R)=\Omega(n\log n)$, so we assume $G^R$ contains $P_4^{213}$, that is, $G$ contains $P_4^{132}$.
 
We remark here that the disjoint union of $P_4^{213}$ and $P_4^{132}$ is a (disconnected) edge-ordered graph whose extremal function is linear. Therefore, we need to use again that $G$ is connected, the fact that it contains both $P_4^{213}$ and $P_4^{132}$ is not enough.

Let $G'$ be a bipartition of $G$ with all right vertices close. Such a bipartition exists by Lemma~\ref{ocn2}. Clearly, $G'$ must contain one of the bipartitions $P_4^{+213}$ or $P_4^{-213}$ of $P_4^{213}$. The common vertex of edges labeled 1 and 3 in $P_4^{213}$ is not close, but it is a right vertex in $P_4^{+213}$, so $G'$ must contain $P_4^{-213}$. Similarly, $G'$ must contain either $P_4^{+132}$ or $P_4^{-132}$, but the latter contains a right vertex that is not close, so $G'$ must contain $P_4^{+132}$.

Let $G''$ be the other bipartition of $G$. Clearly, $G''$ contains $P_4^{+213}$.

Lemma~4.11 of \cite{GMNPTV} claims that there exist edge-ordered bigraphs on $n$ vertices with $\Theta(n\log n)$ edges avoiding both $P_4^{+132}$ and $P_4^{+213}$. Clearly, these edge-ordered bigraphs avoid both $G'$ (as it contains $P_4^{+132}$) and $G''$ (as it contains $P_4^{+213}$). But then the edge-ordered graphs underlying this sequence of graph must avoid $G$ as the connected edge-ordered graph $G$ has only these two bipartitions. This shows $\ex_<(n,G)=\Omega(n\log n)$, as needed.
\end{proof}

\section{Edge-ordered paths}\label{sec4}

In this section, we estimate the extremal functions of edge-ordered paths. Let us first observe that Theorem~2.3 in \cite{GMNPTV} (the Erd\H os-Stone-Simonovits theorem for edge-ordered graphs) states that $\ex_<(n,G)=\Theta(n^2)$ whenever the order chromatic number of the edge-ordered graph $G$ is more than 2. The theorem even gives the exact asymptotics in terms of the order chromatic number. Therefore, we will concentrate on finding the order chromatic number of paths and estimating the extremal function of order chromatic number 2 paths. Note also that Lemma~\ref{semi-right} makes it very easy to determine if the order chromatic number of a path is 2.

While Theorem~2.3 in \cite{GMNPTV} gives only a very weak upper bound for the extremal function of order chromatic number 2 edge-ordered graphs, namely $o(n^2)$,  in the more recent paper \cite{KT} we give a stronger bound $\ex_<(n,G)=n2^{O\left(\sqrt{\log n}\right)}$ in case $G$ is an order chromatic number 2 edge-ordered forest. But the same paper formulates the even stronger conjecture that $\ex_<(n,G)=O(n\log^cn)$ holds for these edge-ordered graphs for some $c=c(G)$. This conjecture is wide open even for edge-ordered paths. Note however, that the corresponding conjecture for vertex-ordered graphs has recently been refuted, see \cite{PettieT}. Here we concentrate on the cases where such a bound can be established for an edge-ordered path $G$.

The MSc thesis of the first author, \cite{Kthesis} contained an elegant direct proof of a dichotomy result for edge-ordered paths, characterizing those with linear extremal functions and stating that the extremal functions of the remaining edge-ordered paths are $\Omega(n\log n)$. We state this result below as Corollary~\ref{main} and give a simple derivation from Theorem~\ref{dich}.

The edge-ordered path is called a \emph{monotone path} if the labels increase (or decrease) mo\-not\-o\-nous\-ly along the path. Note that the fact that monotone paths have linear extremal functions was established by R\"odl, \cite{R} well before edge-ordered graphs and their extremal functions were formally introduced. We call an edge-ordered path with at least $ 3 $ edges \emph{flipped}, if it is obtained from a monotone path by switching either the two smallest or the two largest labels. The fact that the extremal function of a flipped path is linear can also be deduced from Lemma~\ref{add} below that was already implicit in \cite{GMNPTV}.

Recall from the previous section that we simply denote the edge-ordered path $ P_k^L $ by listing the labels along the path in the upper index. For example, $ P_4^{123} $ is a 3-edge monotone path. A $k$-edge flipped path is isomorphic to either $P_{k+1}^{12\dots(k-2)k(k-1)}$ or $P_{k+1}^{2134\dots(k-1)k}$. Recall also that bipartitions of edge-ordered paths are denoted by including a sign $+$ or $-$ in the upper index signifying whether the path starts at a right or a left vertex.

\begin{corollary}[Dichotomy result on extremal function of edge-ordered paths]\label{main}
    Let $ P $ be an edge-ordered path with at least two edges.
    \begin{enumerate}
        \item If $ P $ is monotone or flipped, then $ \ex_<(n, P) = \Theta(n) $, 
        \item In all other cases, $ \ex_<(n, P) = \Omega(n \log n) $.
    \end{enumerate}
\end{corollary}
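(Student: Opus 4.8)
The plan is to derive Corollary~\ref{main} directly from Theorem~\ref{dich}, since an edge-ordered path is a connected edge-ordered graph and the theorem gives a complete dichotomy for such graphs. The task therefore reduces to two translations: (1) showing that the monotone and flipped paths are exactly the edge-ordered paths $P$ for which $P$ or $P^R$ is a semi-caterpillar of order chromatic number $2$, and (2) confirming that the $\Omega(n\log n)$ bound for all other paths is precisely the second clause of Theorem~\ref{dich}. Part (2) is immediate once (1) is established. For part (1) I would use Corollary~\ref{semi-right} together with Lemma~\ref{equivalence}: a semi-caterpillar has order chromatic number $2$ iff one of its bipartitions is a right caterpillar, and right caterpillars are exactly what one builds from $T_0$ by a sequence of extensions. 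Since the underlying simple graph here must be a path $P_k$, I only need to understand which extension sequences keep the underlying graph a path.

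The key observation for part (1) is that if the underlying graph is a path, an extension step is very restricted: when we add new degree-$1$ vertices attached to the ends of the current smallest edge $e=uv$, the resulting graph can only remain a path if we add at most one new edge at $u$ and at most one at $v$, and moreover we can add an edge at an endpoint of $e$ only if that endpoint currently has degree $1$ in the edge-ordered bigraph being extended. So each extension of a path-shaped right caterpillar either prepends a single smaller edge to one free end of the current smallest edge, or prepends one smaller edge to each of the two free ends (the latter only possible when the current path is a single edge). Unwinding the recursion, a path-shaped right caterpillar built from $T_0$ is obtained by repeatedly prepending smaller and smaller edges at the ends; tracking the left/right coloring and the ``all new edges at the left end precede all new edges at the right end'' rule (item 3 of the extension definition) shows the only labelings achievable are: the monotone path, and the monotone path with its two smallest labels swapped — i.e.\ one family of flipped paths. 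Then, passing to the reverse (whose extremal function agrees with that of $P$, and which is a semi-caterpillar of order chromatic number $2$ iff $P$ is), picks up the monotone paths again (reverse of monotone is monotone) and the flipped paths with the two largest labels swapped. Altogether $P$ or $P^R$ is an order chromatic number $2$ semi-caterpillar iff $P$ is monotone or flipped, which is exactly the stated classification; for completeness one also checks the tiny cases (paths with one or two edges) separately, and notes that $P_4^{213}$, $P_5^{1342}$, $P_5^{1432}$ are not of this form, consistent with Theorem~\ref{semi}.

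The main obstacle I anticipate is the bookkeeping in the ``only monotone and flipped'' direction: one must carefully verify that no other edge-ordering of $P_k$ arises from an extension sequence, which means reasoning about which endpoint of the smallest edge is ``left'' versus ``right'' at each stage and how item 3 of the extension definition forces the relative order of the two new edges when both ends get extended. A clean way to handle this is to argue by induction on $k$: if $P$ (or $P^R$) is an order chromatic number $2$ semi-caterpillar with $k\ge 3$ edges, remove the smallest edge (and any vertex it isolates); by the ``removing the smallest edges of a right caterpillar'' remark in the proof of Lemma~\ref{equivalence}, the result is still an order chromatic number $2$ semi-caterpillar on a path, hence by induction monotone or flipped, and then one checks that re-attaching the smallest edge at a leaf keeps the labeling monotone or flipped — the swapped-pair at the top can only have been created at the very last step. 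This inductive packaging avoids unwinding the whole extension sequence at once and keeps the case analysis to a short finite check at each step.

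One caveat worth stating explicitly: a flipped path as defined here must have at least $3$ edges, so for the $2$-edge path $P_3^{12}$ (which is both monotone and, degenerately, not flipped) the classification still holds since it is monotone; and for very short paths the three forbidden paths of Theorem~\ref{semi} do not even fit, so the dichotomy reduces to the trivial observation that every edge-ordering of a $1$- or $2$-edge path is monotone. Thus the derivation is: cite Theorem~\ref{dich}, prove the equivalence ``($P$ or $P^R$ is an order chromatic number $2$ semi-caterpillar) $\iff$ ($P$ monotone or flipped)'' via Corollary~\ref{semi-right}, Lemma~\ref{equivalence}, and the structure of path-preserving extensions, and conclude.
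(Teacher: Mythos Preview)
Your approach is essentially the paper's own: reduce to Theorem~\ref{dich}, then use Corollary~\ref{semi-right} and Lemma~\ref{equivalence} to classify right caterpillars whose underlying graph is a path by analyzing which extensions preserve the path structure. The paper carries this out almost exactly as you outline, listing the three path-preserving extensions of $T_0$ (yielding $P_3^{+12}$, $P_3^{-12}$, $P_4^{+132}$) and then the unique path-preserving extension at each later step.

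One slip to fix: the right caterpillars built directly over paths are the monotone paths and the flipped paths with the \emph{two largest} labels swapped, not the two smallest. Extensions always add edges \emph{below} the current minimum, so the anomaly created at the first nontrivial step (the extension of $T_0$ to $P_4^{+132}$, where the original edge becomes the maximum sandwiched between the two new smaller edges) stays at the top forever; subsequent extensions just prepend $1,2,\dots$ at the bottom. Passing to $P^R$ then yields the bottom-two-swapped family. Your final union is still correct, but the roles of $P$ and $P^R$ are interchanged relative to what you wrote.

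A second small point about your inductive packaging: removing only the single smallest edge works when the two smallest edges are adjacent, but in $P_4^{+132}$ they are not, and $P_4^{+132}$ is \emph{not} a one-edge extension of $P_3^{32}$ (the deleted edge is attached to the wrong end). You either need to treat $3$-edge paths as a base case, or follow the paper's proof of Lemma~\ref{equivalence} and, when $e_1,e_2$ are non-adjacent, strip all edges below the connecting edge at once.
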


\begin{proof}
Let $P^R$ be the edge-ordered path obtained from $P$ by reversing the edge-order. We need to prove that $P$ or $P^R$ is a semi-caterpillar of order chromatic number 2 if and only if $P$ is monotone or flipped and then the statement of the corollary follows from Theorem~\ref{dich}. By Lemma~\ref{semi-right} semi-caterpillars of order chromatic number 2 are exactly the edge-ordered graphs underlying right caterpillars. By Lemma~\ref{equivalence}, right caterpillars are exactly those edge-ordered bigraphs that can be obtained from $T_0$ by a sequence of extensions. 

If the underlying simple graph of a right caterpillar is a path, then this must also hold for all intermediate edge-ordered bigraphs that those extensions create because if a vertex of degree at least $3$ appears at some point in the process then it cannot disappear and the resulting graph is not a path. Therefore, we need to study sequences of extensions of $T_0$ that create edge-ordered bigraphs with underlying simple paths.

Let $T$ be an edge-ordered bigraph with a path as its underlying simple graph and let $e$ be the minimal edge in $T$. When extending $T$ we add new edges at the ends of $e$. So the underlying simple graph remains a path if we add no new edge at any end of $e$ unless it is one of the two degree 1 vertices in $T$ and even then we add at most one new edge there. Thus, $T_0$ has three such extensions obtained by adding a single new edge at the left end of the only edge, or at the right end or at both ends. This way we obtain $P_3^{+12}$, $P_3^{-12}$, or $P_4^{+132}$, respectively.

Any edge-ordered bigraph whose underlying simple graph is a path longer than a single edge has at most a single extension whose underlying simple graph is still a path: we have to extend the path at the end where its smallest edge is with a single edge which is even smaller. (In case the smallest edge is an intermediate edge there is no such extension.) Starting from $P_3^{+12}$ or $P_3^{-12}$ we obtain bipartitions of monotone paths, while starting from $P_4^{+132}$ we obtain bipartitions of flipped paths, where the order of the largest two edges is flipped. So an edge-ordered path is an order chromatic number 2 semi-caterpillar if and only if it is a monotone path or a flipped path with the two highest labeled edges flipped.

The reverse edge-ordered path $P^R$ is an order chromatic number 2 semi-caterpillar if and only if $P$ is a monotone path or a flipped path with the bottom two edges flipped. One of $P$ or $P^R$ is an order chromatic number 2 semi-caterpillar if and only if $P$ is a monotone or flipped path. This finishes the proof of the corollary.
\end{proof}

Gerbner et al.\ estimated the extremal function of short edge-ordered paths in \cite{GMNPTV}. Here is a summary of their results.

\begin{theorem}[\cite{GMNPTV}]\label{34edgepath}
\begin{itemize}
\item The extremal functions $\ex_<(n,P_4^{123})$, $\ex_<(n,P_4^{132})$, $\ex_<(n,P_5^{1234})$, and $\ex_<(n,P_5^{1243})$ are all $\Theta(n)$.
\item The extremal functions $\ex_<(n,P_5^{1324})$, $\ex_<(n,P_5^{1432})$, and $\ex_<(n,P_5^{2143})$ are all $\Theta(n\log n)$.
\item  $\ex_<(n,P_5^{1342})=\Omega(n\log n)$ and $\ex_<(n,P_5^{1342})=O(n\log^2n)$.
\item $\ex_<(n,P_5^{1423})=\ex_<(n,P_5^{2413})=\binom n2$.
\end{itemize}
\end{theorem}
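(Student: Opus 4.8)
This theorem collects several estimates of \cite{GMNPTV}, so I would organize the proof by the order of magnitude of the bound, noting first that the lower bound $\ex_<(n,P)=\Omega(n)$ is automatic for every edge-ordered path $P$ with $k$ edges (an extremal $P_{k+1}$-free simple graph, equipped with an arbitrary edge-order, avoids $P$), so in each item only one inequality needs work. For the linear cases in the first item: $P_4^{123}$ and $P_5^{1234}$ are monotone paths, hence have linear extremal function by R\"odl's theorem \cite{R} (or by a short token argument: assign to each vertex the length of the longest increasing trail ending there and check that reading the edges in increasing order raises the sum of these values by at least $2$ per edge); and $P_4^{132}$, $P_5^{1243}$ are flipped paths, for which one deletes the global minimum edge --- a pendant edge at an endpoint of the new minimum --- and applies Lemma~\ref{add} to reduce to the monotone case. (Alternatively, all four of these edge-ordered paths underlie right caterpillars, so Lemma~\ref{lin} applies directly.)

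For the superlinear lower bounds (the rest of the second and third items) one must exhibit, for each of $P_5^{1324}$, $P_5^{1432}$, $P_5^{2143}$ and $P_5^{1342}$ and every $n$, an $n$-vertex edge-ordered graph with $\Omega(n\log n)$ edges that avoids it. I would use the standard recursive construction with $\Theta(\log n)$ ``levels'': organize the vertex set by a balanced binary tree, let each level carry a linear number of edges, and choose the global edge-order so that in any embedded copy of a $5$-edge ordered path the edges are confined to a bounded number of consecutive levels; since each level spans only linearly many vertices, no such copy fits. Making this precise for a fixed target path reduces to a bounded case analysis of which ``roles'' of the pattern the crossing (high-level) edges can occupy; \cite{GMNPTV} carries this out in Theorems 4.9 and 4.10 (for $P_5^{1342}$ and $P_5^{1432}$), and analogous constructions handle $P_5^{1324}$ and $P_5^{2143}$.

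For the matching upper bounds: $\ex_<(n,P)=O(n\log n)$ for $P\in\{P_5^{1324},P_5^{1432},P_5^{2143}\}$ should follow from divide-and-conquer on the edge-order. In a $P$-free edge-ordered graph on $n$ vertices with $m$ edges, pass to the median edge $e^\ast$ and split the edges into those below $e^\ast$ and those above; the forbidden pattern forces these two halves to interact only through a linear interface, which yields $f(n)\le 2f(n/2)+O(n)$, hence $f(n)=O(n\log n)$. This is the main obstacle of the whole theorem: one needs a careful analysis of how a $5$-vertex path can straddle the median of the edge-order, and for $P_5^{1342}$ this analysis leaves a residual pattern rich enough to force two nested recursions, giving only $\ex_<(n,P_5^{1342})=O(n\log^2 n)$ and leaving the $\log n$ gap to the $\Omega(n\log n)$ lower bound open. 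Finally, $\ex_<(n,P_5^{1423})=\ex_<(n,P_5^{2413})=\binom n2$ has a trivial upper bound, and the content is to produce, for every $n$, an edge-ordering of $K_n$ with no copy of the path --- equivalently, that the order chromatic number of each of these two paths is infinite. Here I would try a recursively defined order (split $[n]$ in two, order each half recursively, then insert the cross edges in a position prescribed by the recursion) and verify, via a short argument pinning down the forced location of the unique largest edge of the pattern, that $P_5^{1423}$ (respectively $P_5^{2413}$) cannot be embedded.
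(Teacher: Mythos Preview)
The paper gives no proof of this theorem: it is quoted from \cite{GMNPTV} as a summary of results proved there, and the current paper only \emph{uses} these estimates (in the proofs of Theorems~\ref{dich}, \ref{semi} and \ref{13254}). So there is no ``paper's own proof'' to compare against; your proposal is an attempt to reconstruct arguments that live entirely in \cite{GMNPTV}.

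That said, your sketch contains one concrete error worth flagging. Your reduction of $P_4^{132}$ (and hence of $P_5^{1243}$) via Lemma~\ref{add} does not work as stated. In $P_4^{132}$ the global minimum edge (label~1) is pendant, but it is adjacent to the edge labeled~3, not to the edge labeled~2; after deletion, the minimum of the remaining $P_3^{32}$ is the edge labeled~2, so Lemma~\ref{add} would only allow you to reattach a new smallest edge at an endpoint of edge~2, producing $P_4^{321}$ rather than $P_4^{132}$. Thus $P_4^{132}$ is \emph{not} an instance of Lemma~\ref{add} applied to a monotone $P_3$, and the same obstruction propagates to your treatment of $P_5^{1243}$ (deleting edge~1 there leaves $P_4^{243}\cong P_4^{132}$, not a monotone path). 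Your parenthetical alternative --- that all four paths underlie right caterpillars, so Lemma~\ref{lin} applies --- is correct and is how the present paper would handle them; indeed the proof of Corollary~\ref{main} shows that $P_4^{+132}$ arises from $T_0$ by a single extension introducing \emph{two} new edges at once, which is precisely the situation Lemma~\ref{add} cannot cover.

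The rest of your outline is broadly in the right spirit but stays at the level of heuristics. The $O(n\log n)$ upper bounds in \cite{GMNPTV} do use a lower-half/upper-half split on the edge-order, but the recursion is not literally $f(n)\le 2f(n/2)+O(n)$ on the vertex count; getting the interface between the two halves to be ``linear'' requires pattern-specific structural lemmas. For the $\binom{n}{2}$ cases, \cite{GMNPTV} does not build an explicit edge-ordering of $K_n$ but instead uses its combinatorial characterization of infinite order chromatic number (their Theorem~2.4) to certify $\chi_<(P_5^{1423})=\chi_<(P_5^{2413})=\infty$, after which the value $\binom{n}{2}$ is immediate.
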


Note that the theorem lists all edge-ordered paths of three or four edges up to isomorphism or reversing the edge-order and it gives the order of magnitude for the corresponding extremal functions except for $P_5^{1342}$ where the upper bound is $\Theta(\log n)$ times the lower bound.

In this section we extend these results to 5-edge edge-ordered paths. We summarize our bounds on the extremal functions of edge-ordered 5-edge paths in Table~1. Some of the bounds listed there follow directly from observations in \cite{GMNPTV}, but the bound on $\ex_<(n,P_6^{13254})$, which is our main result in this section (Theorem~\ref{13254}), requires new techniques. Our list is complete, it contains all edge-ordered 5-edge paths up to isomorphism or reversing the edge-order, but for some of them, like $P_6^{21453}$ the best upper bound for the extremal function comes from the results in \cite{KT} and is very far from the lower bound of $\Omega(n\log n)$.

\begin{center}
    \begin{longtable}{ |c|c|c|c|c| } \hline
        Labeling ($ L $) & $\chi_<(P^L_6)$ & \multicolumn{2}{c|}{Bounds for $ \ex_<(n,P_6^L) $} & Proved in \\ \hline
        $ 12345, 12354 $ & $2$ & \multicolumn{2}{c|}{$ \Theta(n) $} & Prop.~4.7\cite{GMNPTV}\\ \hline
        $ 12435$ & \multirow{2}{*}{$2$} & \multicolumn{2}{c|}{\multirow{2}{*}{$  \Theta(n\log n) $}}  & Lemma 3.2, Theorem~4.12 \cite{GMNPTV} \\
        $15432$, $ 21543 $,  $ 12543 $ &  & \multicolumn{2}{c|}{} & Theorem~4.2 \cite{GMNPTV} \\ \hline
        $ 12453 $ & \multirow{2}{*}{$2$} & \multirow{2}{*}{$ \Omega(n\log n) $} & \multirow{2}{*}{$ O(n\log^2 n) $} & Theorem~4.14\cite{GMNPTV}\\
        $ 13254 $ &  &  &  & Theorem~\ref{13254}\\ \hline
        $ 14523 $, $ 14532  $, $ 15423 $, $ 21453 $ & $2$ & $ \Omega(n\log n) $ & $ n\cdot2^{O\left(\sqrt{\log n}\right)} $ & Theorem 2~\cite{KT}\\ \hline
        $ 14325 $ & \multirow{2}{*}{$3$} & \multicolumn{2}{c|}{\multirow{2}{*}{$ n^2/4+o(n^2) $}} & Theorem 4.15~\cite{GMNPTV}\\
        $ 21354 $ &  & \multicolumn{2}{c|}{} & Theorem~\ref{21354} \\ \hline
        $ 15243 $, $ 15234 $ & \multirow{5}{*}{$\infty$} & \multicolumn{2}{c|}{\multirow{5}{*}{$ \binom{n}{2} $}} & \multirow{5}{*}{Theorem 2.3~\cite{GMNPTV}} \\
        $ 24513 $, $ 25413 $, $ 15324 $, $ 21534 $ &  & \multicolumn{2}{c|}{} & \\
        $ 13524 $, $ 23514 $, $ 25143 $, $ 24153 $ &  &  \multicolumn{2}{c|}{} & \\
        $ 14253 $, $ 12534 $, $ 15342 $, $ 14352 $ &  & \multicolumn{2}{c|}{} & \\ 
        $ 13425 $, $ 13452 $, $ 13542 $, $ 25314 $ &  & \multicolumn{2}{c|}{} & \\\hline    \end{longtable}
    {{\bf Table 1.} Estimates for the extremal functions of the edge-ordered 5-edge paths.}
\end{center}

The first line of attack in determining the extremal function of any edge ordered path is to determine its order chromatic number. Lemma~\ref{semi-right} makes it really easy to decide if the order chromatic number of an edge-ordered path is 2. Theorem~2.4 in \cite{GMNPTV} gives a way to determine the order chromatic number in general. Deciding if the order chromatic number is finite is still relatively easy, but for intermediate values of the order chromatic number the procedure gets impractical really fast. The order chromatic number of the 4-vertex edge ordered graph $D_4$ given in \cite{GMNPTV} is still unknown. Therefore, we give no reference for the order chromatic number in Table~1 if it is 2 or infinite, but give a reference in the two cases when it is an intermediate value (namely 3). The version of the Erd\H os-Stone-Simonovits theorem for edge-ordered graphs (Theorem~2.3 in \cite{GMNPTV}) gives the exact value of the extremal function if the order chromatic number is infinite and its exact asymptotics if the order chromatic number is finite but larger than 2. Therefore we concentrate on the order chromatic number 2 case.

Corollary~\ref{main} gives a linear or $\Omega(n\log n)$ lower bound for every edge-ordered path. The linear bounds are tight and unfortunately no lower bound is known for any edge-ordered path (or forest) of order chromatic number 2 above $\Omega(n\log n)$. Thus, we only give the source of the upper bounds on the extremal functions for order chromatic number 2 paths in Table~1.

\medskip

In the rest of this section we state and prove the various bounds stated in Table~1. We start with establishing the order chromatic numbers of the edge-ordered paths. As mentioned above deciding whether that value is 2 is trivial using Lemma~\ref{semi-right} and deciding if it is infinite is easy using Theorem~2.4 in \cite{GMNPTV}. The exhaustive list in Table~1 contains only two edge-ordered 5-edge paths with an intermediate order chromatic number, $P_6^{14325}$ and $P_6^{21354}$. The order chromatic number of the former was established to be 3 in Theorem~4.15 of \cite{GMNPTV}. Here we give a similar argument proving the same for the latter one.

\begin{theorem}\label{21354}
    $\chi_<(P^{21354}_6) = 3$.
\end{theorem}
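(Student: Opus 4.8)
The plan is to establish the two inequalities $\chi_<(P_6^{21354})\le 3$ and $\chi_<(P_6^{21354})\ge 3$ separately, following the template used for $P_6^{14325}$ in Theorem~4.15 of \cite{GMNPTV}. For the upper bound $\chi_<(P_6^{21354})\le 3$ I would exhibit a finite simple graph $H$ with $\chi(H)=3$ such that every edge-ordering of $H$ contains $P_6^{21354}$; a natural candidate is a suitable blow-up of a triangle, or a small graph built from triangles, chosen large enough that a Ramsey/pigeonhole argument on any edge-labeling forces the required $5$-edge pattern. The key step here is a careful case analysis: given an arbitrary injective labeling $L:E(H)\to\mathbb R$, locate a vertex or a short subpath where the local order of incident edges is forced, then extend greedily along the rest of $H$ to build the labels $2,1,3,5,4$ in order along a path. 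Since $P_6^{21354}$ is not a semi-caterpillar of order chromatic number $2$ (one checks via Lemma~\ref{semi-right} that it has no proper $2$-coloring with one class close — the vertex incident to the edges labeled $1$ and $3$ and the vertex incident to $3$ and $5$ cannot both be made close), the value is at least $3$, so once finiteness is shown the upper bound $3$ will be forced provided $H$ can be taken $3$-chromatic; I would verify that $\chi(H)=3$ suffices by ruling out that any bipartite graph works, which is exactly the content of $\chi_<(P_6^{21354})\ne 2$.

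For the lower bound $\chi_<(P_6^{21354})\ge 3$ I would show that no bipartite graph $H$ has the property that all of its edge-orderings contain $P_6^{21354}$; equivalently, every bipartite $H$ admits an edge-ordering avoiding $P_6^{21354}$. The standard device (as in \cite{GMNPTV}, Theorem~2.4) is to order the edges of a bipartite graph $H$ with parts $A$ and $B$ lexicographically: fix linear orders on $A$ and on $B$, and order the edges $ab$ primarily by the position of $a$ in $A$, breaking ties by the position of $b$ in $B$. One then argues that in such a lexicographic edge-ordering the edge-ordered path $P_6^{21354}$ cannot embed: tracking how the pattern $21354$ alternates between ``jumps in the $A$-coordinate'' and ``jumps in the $B$-coordinate'' along a path leads to a contradiction with the monotonicity forced by the lexicographic rule. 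This reduces to checking a finite number of cases according to which end of the path lies in $A$ and how the five edges' labels distribute between $A$-dominated and $B$-dominated comparisons.

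I expect the main obstacle to be the upper bound, specifically pinning down the smallest (or any explicit) $3$-chromatic graph $H$ all of whose edge-orderings contain $P_6^{21354}$, and then pushing through the forced-embedding argument cleanly. The delicate point is that $P_6^{21354}$ has a fairly ``entangled'' order type — the labels $1,3,5$ and $2,4$ are interleaved — so a naive triangle blow-up may not obviously work and one may need to iterate the construction (e.g.\ take a longer odd structure or apply a Ramsey-type bound to guarantee long monotone sub-labelings at a vertex of high degree). Once the right $H$ is identified, the embedding argument should be a routine, if somewhat lengthy, case check, entirely parallel to the $P_6^{14325}$ argument in \cite{GMNPTV}; the lower bound via lexicographic orderings is the easier half and essentially a direct verification.
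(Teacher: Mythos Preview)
Your lower bound is fine and essentially what the paper does: the two ends of the middle edge (labeled $3$) of $P_6^{21354}$ are incident to $\{1,3\}$ and to $\{3,5\}$ respectively; neither is close, and since they are adjacent any proper $2$-coloring places a non-close vertex in each class, so Lemma~\ref{ocn2} gives $\chi_<>2$ immediately. The separate lexicographic construction you sketch is redundant once Lemma~\ref{ocn2} is invoked.

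The upper bound plan has a genuine gap. You propose to exhibit a $3$-chromatic $H$ and verify that \emph{every} edge-ordering of $H$ contains $P_6^{21354}$, and you anticipate needing Ramsey-type arguments or iterated blow-ups if a small triangle blow-up fails. The paper avoids all of this via Theorem~2.4 of \cite{GMNPTV}, which reduces $\chi_<\le 3$ to checking only the finitely many \emph{canonical} edge-orderings of $K_{3\times 3}$, not all edge-orderings of some large graph. Concretely, up to isomorphism one may assume the nine edges $v_iw_j$ carry labels $3i+j$ and are either the nine smallest or the nine largest edges; since $P_6^{21354}$ is isomorphic to its own reverse this halves the cases, and the remaining verification is a two-line split: $w_2v_1w_1v_2$ and $w_2v_1w_1v_3$ are both copies of $P_4^{213}$, and whichever of $u_1v_2$, $u_1v_3$ is larger determines which extension through $u_1$ completes a copy of $P_6^{21354}$. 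Without the canonical-orderings reduction your outline never becomes a finite check, and the Ramsey machinery you worry about is not needed.
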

\begin{proof}
Neither end of the middle edge in $P_6^{21354}$ is close, so by Lemma~\ref{semi-right} we have $\chi_<(P_6^{21354}) > 2 $. In order to prove $ \chi_<(P_6^{21354}) \le 3 $, by Theorem~2.4 of \cite{GMNPTV}, it is enough to show that all canonical edge-orders of $ K_{3 \times 3} $ (the complete three-partite graph with three vertices in each of its three classes) contain $ P_6^{21354} $. There are many canonical edge-orders, but we do not even need their exact definition. For us, it is enough to know the fact about them stated in the following paragraph. This fact is stated in \cite{GMNPTV} and used in the proof of Theorem~4.15 there.
		
Let us denote the vertices  of $ K_{3\times 3} $ by $ u_1, u_2, u_3, v_1, v_2, v_3, w_1, w_2, w_3 $ in such a way that the edges are $ u_iv_j $, $ u_iw_j $ and $ v_iw_j $ for $ 1 \le i ,  j \le 3 $. Every canonical edge order of $ K_{3\times 3} $ can be obtained (up to isomorphism)  from a labeling of $K_{3\times3}$ in which $ L(v_iw_j) = 3i+j $ for every $i$ and $j$ such that these are the $ 9 $ smallest labeled edges or the 9 largest labelled edges. As reversing the edge-order of the path $ P_6^{21354} $ yields an isomorphic edge-ordered path, any edge-ordered graph contains $ P_6^{21354} $ if and only if its reverse order contains it. Therefore it is enough to consider the canonical edge-labeling $ L $ of $ K_{3 \times 3} $ having $ L(v_iw_j)=3i+j $ as the 9 smallest labels.
	
We can see that the paths $ w_2v_1w_1v_2 $ and $ w_2v_1w_1v_3 $ are both isomorphic to $ P^{213}_4 $. If the edge $ v_2u_1 $ is labeled higher than the edge $ v_3u_1 $ then the path $ w_2v_1w_1v_2u_1v_3 $ is isomorphic to $ P_6^{21354} $. Otherwise the path $  w_2v_1w_1v_3u_1v_2 $ is isomorphic to $ P_6^{21354} $. So $P_6^{21354} $ is contained in every canonical labeling of $K_{3\times3}$ and therefore $ \chi_<(P_6^{21354}) \le3 $.
\end{proof}

We turn now to the labelings of $ P_6 $ that have order chromatic number $ 2 $. The following lemma will prove useful in many cases. It was used implicitly in \cite{GMNPTV} but did not appear there explicitly.

\begin{lemma}\label{add}
	Consider a non-trivial edge-ordered graph $ H $. Let $ H' $ be the graph obtained by adding a single new edge connecting one end of the smallest edge in $H$ to a new vertex outside $H$ and making this new edge smaller then any edge in $H$. Then $ \ex_<(n,H') = \ex_<(n,H) + O(n) $.
\end{lemma}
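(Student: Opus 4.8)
# Proof proposal for Lemma~\ref{add}

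\textbf{Overview.} The inequality $\ex_<(n,H')\ge\ex_<(n,H)$ is trivial since $H'$ contains $H$ as a subgraph, so the content is the upper bound $\ex_<(n,H')\le\ex_<(n,H)+O(n)$. The plan is to take an extremal $H'$-avoiding edge-ordered graph $G$ on $n$ vertices, throw away a linear number of edges in a controlled way so that the remaining graph $G^\ast$ avoids $H$, and conclude $|E(G)|\le|E(G^\ast)|+O(n)\le\ex_<(n,H)+O(n)$.

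\textbf{Which edges to delete.} Let $e_0$ denote the smallest edge of $H$ and let $f_0<e_0$ be the extra pendant edge of $H'$, attached at (say) the left endpoint of $e_0$; write $m=|E(H)|$ and $m'=|E(H')|=m+1$. The idea is that the only way a copy of $H$ inside $G$ fails to extend to a copy of $H'$ is if the vertex playing the role of the left endpoint of $e_0$ has no private small neighbor available. So I would delete, \emph{for every vertex $v$ of $G$, the $m'$ smallest edges incident to $v$} (if $v$ has fewer than $m'$ edges, delete all of them). Call the resulting graph $G^\ast$. Since each deletion step removes at most $m'$ edges per vertex, $|E(G)|-|E(G^\ast)|\le m'n=O(n)$, with the implied constant depending only on $H$.

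\textbf{Why $G^\ast$ avoids $H$.} Suppose for contradiction that $G^\ast$ contains a copy of $H$; say vertex $x\in V(G^\ast)\subseteq V(G)$ plays the role of the left endpoint of $e_0$ and let $g$ be the edge of this copy playing the role of $e_0$, so $g$ is the smallest edge of the copy and $g$ survived into $G^\ast$. Since $g$ is present in $G^\ast$, it is \emph{not} among the $m'$ smallest edges incident to $x$ in $G$; hence $x$ has at least $m'$ edges in $G$ that are smaller than $g$, and in particular smaller than every edge of the $H$-copy. The copy of $H$ uses at most $m-1$ edges incident to $x$ other than $g$, so at least $m'-(m-1)=2$ of those $m'$ small edges at $x$ lead to a vertex outside the $H$-copy; pick one, call it $f$. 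Then $f<g\le(\text{all edges of the copy})$ and $f$ joins $x$ to a new vertex, so adjoining $f$ to the $H$-copy produces a copy of $H'$ in $G$ — contradicting that $G$ avoids $H'$. (If instead $f_0$ is attached at the right endpoint of $e_0$, the argument is symmetric; a single pendant needs only one free small edge, so even the crude bound $m'-(m-1)=2\ge1$ suffices.) Hence $G^\ast$ avoids $H$, giving $\ex_<(n,H')=|E(G)|\le|E(G^\ast)|+m'n\le\ex_<(n,H)+O(n)$.

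\textbf{Main obstacle.} The one subtlety, and the step I would be most careful about, is the bookkeeping in the last paragraph: one must make sure the deleted edge $f$ genuinely goes to a vertex \emph{outside} the entire $H$-copy (not merely outside the neighborhood of $x$ within the copy), so that the extension is a legitimate embedding of $H'$. This is why I delete the $m'$ smallest edges at each vertex rather than just one: it guarantees enough slack to dodge all vertices of the copy that happen to be small neighbors of $x$. Everything else — the linear edge count, the reduction to the extremal graph, the trivial lower bound — is routine. Note also that the statement and proof are insensitive to which end of $e_0$ the new edge $f_0$ is attached to, which is exactly what makes Lemma~\ref{add} applicable to both ways of extending by a single edge.
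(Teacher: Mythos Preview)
Your approach is exactly the paper's: delete a bounded number of smallest edges at each vertex, then argue the remaining graph avoids $H$. However, the bookkeeping in your key paragraph is flawed. The $m'$ small edges at $x$ are all \emph{strictly smaller than $g$}, hence smaller than every edge of the $H$-copy; none of them is an edge of that copy. So the sentence ``the copy of $H$ uses at most $m-1$ edges incident to $x$ other than $g$'' is irrelevant to whether those small edges land inside or outside the copy --- it is a non sequitur. What you actually need to bound is the number of those $m'$ small edges whose \emph{other endpoint} lies in the copy, and that endpoint need not be a neighbor of $x$ in $H$. Since $G$ is simple, from $x$ there is at most one edge to each of the $|V(H)|-1$ other vertices of the copy, so at most $|V(H)|-1$ of the small edges can be bad.

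Consequently the correct parameter is $k=|V(H)|$ (this is what the paper deletes), not $m'=|E(H')|$. Your choice $m'$ happens to be large enough when $|E(H)|\ge|V(H)|-1$, e.g.\ when $H$ is connected, but the lemma is stated for arbitrary non-trivial $H$; for a disconnected forest $H$ one has $|E(H')|<|V(H)|$ and your deletion may not leave any small edge pointing outside the copy. Replace $m'$ by $k=|V(H)|$ and replace the ``at most $m-1$ edges incident to $x$'' sentence by the simple-graph pigeonhole above, and the argument becomes identical to the paper's proof.
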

\begin{proof}	 
Let $k$ be the number of vertices in $H$ and let $ G $ be an edge-ordered graph on $ n $ vertices that avoids $ H' $ and has the maximal number $ m = \ex_<(n,H') $ of edges. For every vertex $ v $ of $ G $, remove the smallest $ k $ edges incident to $ v $ (or all incident edges if the degree of $ v $ is less than $ k $), and denote by $G'$ the subgraph so obtained.
		
Clearly, $ G' $ has at least $ m - kn $ edges. It avoids $ H $ because otherwise we could extend a copy of  $ H $ with one of the deleted edges to obtain a copy of $ H' $ in $G$. We can choose any one from the $k$ smallest edges incident to the appropriate vertex as long as its other end is outside the copy of $H$ we are extending.
		
So $ m - kn \le \ex_<(n,H) $ which implies $ \ex_<(n,H') \le \ex_<(n,H) + O(n) $. In the other direction we have $ \ex_<(n,H') \ge \ex_<(n,H) $ because $ H' $ contains $ H $.
\end{proof}

From Lemma~\ref{add} and Theorem~\ref{34edgepath}, we get the upper bounds in Table~1 on the extremal functions of $P_6^{12543}$, $P_6^{12435}$, and $P_6^{12453}$. The linear bounds for monotone and flipped paths stated in Corollary~\ref{main} also follow.

Lemma~4.2 of \cite{GMNPTV} states that if $P$ is the concatenation of two monotone paths such that all edges in one of them is smaller than any edge in the other, then $\ex_<(n,P)=O(n\log n)$. This result applies to $P_6^{15432}$ and $P_6^{21543}$ and gives the bound stated in Table~1. It also applies to $P_6^{12543}$ that was already covered in the previous paragraph too.
	
Five edge-ordered 5-edge paths remain (up to isomorphism and reversing the edge order) that have order chromatic number 2, but for which the techniques of \cite{GMNPTV} do not seem to apply directly. These are $P_6^{13254}$, $P_6^{14523}$, $P_6^{14532}$, $P_6^{15423}$, and $P_6^{21453}$. For the extremal function of four of these edge-ordered paths, the best bound is $n\cdot2^{O\left(\sqrt{\log n}\right)}$ coming from \cite{KT}. This bound is much weaker than the $O(n\log^c n)$ bound conjectured in the same paper. We could, however, verify the conjecture for $P_6^{13254}$ (see Figure~\ref{fig3}):

\begin{figure}
	\centering
	\fbox{\includegraphics{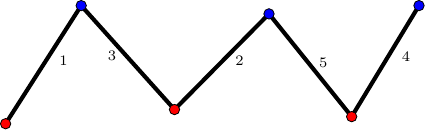}}
	\caption{$ P_6^{13254} $.}
	\label{fig3}
\end{figure}

\begin{theorem}\label{13254}
    $ \ex_<(n, P_6^{13254}) = O(n \log^2 n) $.
\end{theorem}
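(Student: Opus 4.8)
The plan is to prove an upper bound $\ex_<(n,P_6^{13254})=O(n\log^2 n)$ by a recursive/divide-and-conquer argument on the edge-order, in the spirit of the arguments used in \cite{GMNPTV} for $P_5^{1342}$ and $P_6^{12453}$. Let $G$ be an edge-ordered graph on $n$ vertices with $m$ edges that avoids $P_6^{13254}$. First I would set up the usual dichotomy at the median edge: let $e^*$ be the edge whose label is the median, splitting $E(G)$ into a ``low'' half $E_\ell$ (the smallest $m/2$ edges) and a ``high'' half $E_h$. Write $G_\ell$ and $G_h$ for the corresponding spanning subgraphs. The key structural observation to establish is that if $G$ avoids $P_6^{13254}$, then the high part $G_h$ cannot contain the pattern consisting of the three edges $\{1,3,2\}$ of $P_6^{13254}$ realized ``in the middle'' in a way that extends to the low part. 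Concretely, one wants: if $G_h$ contains a copy of some small fixed forbidden edge-ordered subgraph $F$ (on $O(1)$ vertices) at a vertex $v$, then any two low-edges incident to $v$ with the right relative order produce a copy of $P_6^{13254}$, forcing a bound on the number of such low edges.

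The main step is identifying the right invariant. Looking at the labels $1,3,2,5,4$ along the path $v_0v_1v_2v_3v_4v_5$: the edges are $v_0v_1$ (label 1), $v_1v_2$ (label 3), $v_2v_3$ (label 2), $v_3v_4$ (label 5), $v_4v_5$ (label 4). Notice that edges $3,5,4$ form a copy of $P_4^{132}$ (a ``decreasing-then-the middle is biggest'' path) hanging off the vertex $v_2$, while edges $1$ and $2$ are small edges attached at $v_0$ and $v_3$ respectively, with $1<2$ and both below everything in the $\{3,5,4\}$-part; and crucially $v_1$ is the common vertex of edges $1$ and $3$. So I would argue: restrict to $G_h$; if $G_h$ (which avoids $P_6^{13254}$ too, since $G$ does) contains many copies of $P_4^{132}$ in a suitably ``anchored'' sense, then using two low edges one completes $P_6^{13254}$. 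More precisely I would iterate: let $G_1 = G_h$ and repeatedly pass to the subgraph of high-leaning edges (using a $c$-leaning-type argument as in Lemma~\ref{2cn}/Lemma~\ref{riglef} but adapted, losing an $O(n)$ additive term at each of $O(\log n)$ levels) until what remains, if nonempty, yields the forbidden $P_6^{13254}$. This gives a recursion $f(n,m)\le$ (contributions from $\le \log n$ peeling levels, each $O(n)$) $+$ (two halves), i.e. roughly $f(m)\le 2f(m/2)+O(n\log n)$, solving to $O(n\log^2 n)$.

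I expect the main obstacle to be pinning down exactly which bounded-size pattern must be forbidden in the recursively-peeled high subgraph so that (a) its absence forces near-linearly many edges to be removed per level and (b) its presence genuinely completes a $P_6^{13254}$ using only low edges that are guaranteed to exist. The subtlety is that $P_6^{13254}$ is not a ``flipped'' or ``monotone'' path, so the naive one-median split does not immediately work — this is why Corollary~\ref{main} only gives the $\Omega(n\log n)$ lower bound and why \cite{KT} only gets $n2^{O(\sqrt{\log n})}$; the gain to $O(n\log^2 n)$ must come from the fact that after removing the $O(\log n)$ smallest edges at each vertex (a telescoped version of Lemma~\ref{add}), the ``dangerous'' configurations in the surviving graph are highly constrained, and a second median split on the high part handles the remaining $P_4^{132}$-type obstruction (whose extremal function is already linear by Theorem~\ref{34edgepath}). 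The bookkeeping — making sure the extension vertices $v_0,v_1,\dots,v_5$ are all distinct and that the low edges used at $v_0$ and $v_3$ avoid the already-used vertices — is routine given enough slack, exactly as in the proof of Lemma~\ref{T}, so I would not belabor it.
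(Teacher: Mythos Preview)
Your proposal is not a proof but a plan, and the plan is missing precisely the step you yourself flag as ``the main obstacle'': you never identify the auxiliary forbidden pattern, you only speculate about it. Your guess that the relevant small pattern is $P_4^{132}$ is not the one that works; note that $\ex_<(n,P_4^{132})$ is linear, so forbidding it in a subgraph only buys you $O(n)$ edges per level, and you have given no mechanism for why $O(\log n)$ levels of a $c$-leaning peel should terminate in something that forces $P_6^{13254}$. The $c$-leaning machinery of Lemmas~\ref{2cn}/\ref{riglef} is designed to find right caterpillars, and $P_6^{13254}$ is not one, so invoking it ``adapted'' without saying how is exactly the gap.

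What the paper does instead is the following. First pass to a bipartition (left/right vertices). Then define, for each vertex $v$, the label $l(v)$ of its second-smallest incident edge, and call an edge $xy$ \emph{left inclined} if $l(x)<l(y)\le L(xy)$ and \emph{right inclined} symmetrically; all but $O(n)$ edges are inclined one way or the other. The key structural lemma is that if $G$ avoids $P_6^{+13254}$ then its left-inclined subgraph $G_l$ avoids $P_5^{-2143}$ --- the four-edge path $2143$, not $P_4^{132}$. This is what lets one invoke the known bound $\ex_<(n,P_5^{2143})=\Theta(n\log n)$ from Theorem~\ref{34edgepath}. The $O(n\log^2 n)$ then comes from a recursion, but the recursion is on the number of \emph{vertices}, not edges: one shows that after a median split and passage to left-inclined subgraphs, the low and high halves are supported on \emph{disjoint vertex sets}, so one of them lives on $\le n/2$ vertices, giving $f(n)\le 2f(\lfloor n/2\rfloor)+O(n\log n)$. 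Your recursion $f(m)\le 2f(m/2)+O(n\log n)$ on the number of edges would also solve correctly, but you have not supplied any argument establishing it; the vertex-disjointness is the substantive content that makes the recursion go through.
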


We will prove this theorem through a series of lemmas. We will use several proof techniques from the proof of Theorem~\ref{dich} in the previous section. A few of these have their origin in \cite{GMNPTV} like breaking symmetry by  considering edge-ordered bigraphs, or splitting a large edge-ordered graph in smaller parts according to the values of the labels. (We use lower and upper halves here as in \cite{GMNPTV}). 

Let $ G $ be an edge-ordered bigraph on $ n $ vertices having $ m $ edges. Throughout this section, we assume that the edge ordering in $ G $ is given by the labeling $ L $. In the rest of this section, we will refer to an edge of $G$ by first writing its left end, so the left end of the edge $xy$ is $x$, its right end is $y$. For a vertex $ v $ of $ G $, define the vertex label $ l(v) $ to be the second lowest label incident to it (vertices of degree at most $ 1 $ are not assigned a label). 
	
We define an edge $xy$ of $G$ to be \emph{left inclined} if $ l(x) < l(y) \le L(xy) $, and we say that $xy$ is \emph{right inclined} if $ l(y) < l(x)\le L(e) $. The edges that are neither left inclined nor right inclined are said to be \emph{non-inclined}. Denote the subgraph of $ G $ formed by its left inclined edges and all vertices of $G$ by $ G_l $ and the subgraph formed by the right inclined edges and all vertices of $G$ by $ G_r $.
	
The notions of left inclined and right inclined edges are somewhat similar, but should not be confused with the notions of $c$-left-leaning and $c$-right-leaning used in the previous section. We are not using $c$-left-leaning and $c$-right-leaning in this section except for calling the attention to the differences between these notions in this paragraph. While edges can be both $c$-left-leaning and $c$-right-leaning at the same time, left inclined edges are never right inclined, and vice versa. Another difference is that any $c$-left leaning edge of a subgraph is also $c$-left-leaning in the entire edge-ordered bigraph, but the analogous statement fails for left inclined edges. As for similarities, note how Lemmas~\ref{2cn}, \ref{obs} and their proofs are almost identical.

\begin{lemma}\label{obs}
	Let $ G $ be an edge-ordered bigraph on $ n $ vertices. Then $ G $ has at most $ 2n $ non-inclined edges.
\end{lemma}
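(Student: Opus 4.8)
The plan is to imitate the proof of Lemma~\ref{2cn} almost verbatim, since the notion of non-inclined edge is designed precisely so that a vertex-local counting argument applies. Concretely, I would show that every non-inclined edge of $G$ must be one of the two smallest edges incident to (at least) one of its endpoints; since each vertex has at most two such smallest edges, this gives at most $2n$ non-inclined edges in total.

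First I would fix an edge $xy$ of $G$ and suppose it is \emph{not} among the two smallest edges at $x$ and \emph{not} among the two smallest edges at $y$. Then both $l(x)$ and $l(y)$ are defined, and moreover $l(x)<L(xy)$ and $l(y)<L(xy)$: indeed $l(x)$ is the second smallest label at $x$, and if $xy$ is not among the two smallest at $x$ then there are at least two edges at $x$ smaller than $xy$, so in particular the second smallest one has label $l(x)<L(xy)$; likewise $l(y)<L(xy)$. Now compare $l(x)$ and $l(y)$ (they are distinct since $L$ is injective and they are labels of distinct edges — one incident to $x$, one to $y$; even if these were the same edge it would be incident to both, but that edge would then be among the smallest at both endpoints, contradicting our assumption, so the two edges realizing $l(x)$ and $l(y)$ are genuinely different and the labels differ). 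If $l(x)<l(y)$, then $l(x)<l(y)\le L(xy)$, so $xy$ is left inclined. If $l(y)<l(x)$, then symmetrically $xy$ is right inclined. Either way $xy$ is inclined, contradicting the assumption that it is non-inclined.

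Therefore every non-inclined edge is among the two smallest edges incident to some vertex of $G$. Each vertex contributes at most two edges to this collection, so the number of non-inclined edges is at most $2n$, proving the lemma.

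The only subtlety — and the place to be slightly careful — is the degenerate bookkeeping around vertices of small degree: the vertex label $l(v)$ is undefined when $\deg(v)\le 1$, so a priori an edge $xy$ with $\deg(x)\le 1$ cannot be inclined via $x$. But $\deg(x)\le 1$ forces $xy$ to be the unique (hence smallest) edge at $x$, so such an edge is automatically counted among "the at most two smallest edges at $x$" and creates no problem; the contrapositive argument above only invokes $l(x)$ and $l(y)$ under the hypothesis that $xy$ is not among the two smallest at either endpoint, which already guarantees $\deg(x),\deg(y)\ge 3$ and hence that both vertex labels exist. I do not anticipate any real obstacle here — the statement is a direct analogue of Lemma~\ref{2cn} with $c=1$ and essentially the same one-paragraph proof.
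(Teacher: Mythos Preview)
Your proposal is correct and follows essentially the same approach as the paper: show that any edge not among the two smallest at either endpoint satisfies $l(x)<L(xy)$, $l(y)<L(xy)$, and $l(x)\neq l(y)$, hence is left or right inclined; consequently every non-inclined edge is among the two smallest at some vertex, giving the $2n$ bound. The paper's justification for $l(x)\neq l(y)$ is phrased slightly more directly (the edge realizing $l(x)$ has label strictly below $L(xy)$, so it cannot be $xy$ and is therefore not incident to $y$), but your argument reaches the same conclusion.
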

\begin{proof}
	If an edge $xy$ is not among the two smallest edges at either of its ends, then we have $l(x)<L(xy)$ and $l(y)<L(xy)$. Further, $l(x)$ is the label of an edge not incident to $y$ so it must be distinct from $l(y)$. Therefore either $l(x)<l(y)<L(xy)$ or $l(y)<l(x)<L(xy)$ must hold and $xy$ cannot be non-inclined.
		
	Thus, all non-inclined edges of $G$ are among the two smallest edges at some vertex. This proves the lemma.
\end{proof}
	
\begin{lemma}\label{lem1}
	Let $ G $ be an edge-ordered bigraph avoiding $ P_6^{+13254} $, then $ G_l $ avoids $ P_5^{-2143} $.
\end{lemma}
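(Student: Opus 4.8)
\textbf{Proof proposal for Lemma~\ref{lem1}.}

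The plan is to argue by contraposition: suppose $G_l$ contains a copy of $P_5^{-2143}$, and extend it inside $G$ to a copy of $P_6^{+13254}$. Recall $P_5^{-2143}$ is a $4$-edge path starting at a left vertex, whose edges in path order carry labels $2,1,4,3$; so write its vertices as $a,b,c,d,e$ with $a$ left, and edges $ab$ (label rank $2$), $bc$ (rank $1$), $cd$ (rank $4$), $de$ (rank $3$). All four of these edges are left inclined in $G$. The edge-ordered path $P_6^{+13254}$ is obtained from $P_5^{-2143}$ by prepending one more edge at the appropriate end and shifting ranks: concretely $P_6^{+13254}$ has vertices $a',a,b,c,d,e$ with a new edge $a'a$, and the six edges in path order have label ranks $1,3,2,5,4$ becoming, after inserting the smallest new edge, the pattern $1,3,2,5,4$. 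The key point is that the new smallest edge must be attached at a \emph{right} vertex of the path (so that the bipartition becomes $P_6^{+\dots}$), and it must be smaller than all four existing edges.

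So the first step is to identify which endpoint of the $P_5^{-2143}$-copy receives the new edge. Since the copy starts at a left vertex $a$, the vertices alternate $L,R,L,R,L$, i.e.\ $a=L$, $b=R$, $c=L$, $d=R$, $e=L$. Comparing the two edge-orderings one checks that the new smallest edge of $P_6^{+13254}$ hangs off the endpoint that is a right vertex and is incident to the edge of rank $2$ in the $4$-edge subpath; that endpoint is $b$. (One should double-check this against the explicit label sequences $13254$ versus $2143$, but the combinatorics is routine.) Thus I need to find a vertex $f \notin \{a,b,c,d,e\}$ with $bf \in E(G)$ and $L(bf)$ smaller than all of $L(ab),L(bc),L(cd),L(de)$.

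This is where the hypothesis that $ab$ is left inclined does the work. Being left inclined, $ab$ satisfies $l(a) < l(b) \le L(ab)$, where $l(b)$ is the \emph{second} smallest label at $b$. In particular $b$ has at least two incident edges with label at most $L(ab)$, one of which might be $ba$ itself, but the other gives a distinct neighbour; more carefully, $l(b) \le L(ab)$ and $l(b)$ is achieved by an edge $bf'$ with $bf' \neq$ (the unique smallest edge at $b$), so $b$ has at least two incident edges of label $\le L(ab)$, hence at least one incident edge $bf$ with $f \neq a$ and $L(bf) \le L(ab) < L(bc),L(cd),L(de)$ — using that $bc$ has the \emph{smallest} label among the four path edges and $ab$ the second smallest. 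If this edge is even smaller than $L(bc)$ we are done; if $L(bf)$ lies between $L(bc)$ and $L(ab)$ we should instead take $f$ to be the \emph{smallest}-labelled neighbour of $b$, which is distinct from $a$ as long as $a$ is not that smallest neighbour — and if $a$ \emph{is} the smallest neighbour of $b$ then $L(ab)$ is the minimum at $b$, contradicting $l(b) \le L(ab)$ with $l(b)$ the second smallest unless $b$ has degree $1$, which it does not. So in every case $b$ has an incident edge to some $f \neq a$ with label strictly below $\min(L(ab),L(bc),L(cd),L(de))$.

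The last step is disjointness: we need $f \notin \{a,b,c,d,e\}$. Since $b$ may have bounded degree this is not automatic, so the honest fix is to observe that a left-inclined edge $ab$ guarantees $l(b) \le L(ab)$, meaning $b$ has \emph{two} incident edges below $L(ab)$ other than possibly... — more robustly, we argue that among the two smallest edges at $b$ at least one goes to a vertex outside the fixed $5$-vertex set whenever $b$ has degree $\ge 6$, and handle small-degree $b$ by noting we are free to choose the $P_5^{-2143}$-copy to avoid that situation, or simply absorb the finitely many bad configurations. I expect the main obstacle to be exactly this bookkeeping — pinning down precisely which endpoint is extended and confirming the chosen small edge can be routed to a fresh vertex — rather than any substantive difficulty; the structural content is entirely contained in the definition of "left inclined'' forcing two small edges at the left-to-right-oriented endpoint.
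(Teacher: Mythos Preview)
Your proof has a genuine gap: you extend the path at the wrong vertex. You correctly set up $P_5^{-2143}$ on vertices $a,b,c,d,e$ with $a,c,e$ left and $b,d$ right, and you even write at one point that $P_6^{+13254}$ has vertices $a',a,b,c,d,e$ with new edge $a'a$. But then you switch and claim ``that endpoint is $b$'' and spend the rest of the argument looking for an edge $bf$. Vertex $b$ is \emph{interior} to the path; attaching a pendant edge there produces a tree with a degree-$3$ vertex, not a $P_6$ at all. The correct extension is at the endpoint $a$ (a left vertex), with the new vertex $a'$ being right, so that the resulting path $a'abcde$ starts on the right side and is indeed $P_6^{+13254}$.

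Once the extension point is corrected, your use of the left-inclined hypothesis also needs fixing. From ``$ab$ is left inclined'' you only get $l(a)<l(b)\le L(ab)$, which bounds things by the \emph{second} smallest label on the path; you need the new edge below $L(bc)$, the smallest. The missing idea (and the actual content of the paper's proof) is to use the left-inclined property of \emph{two} edges sharing the right endpoint $b$: from $cb$ left inclined you get $l(b)\le L(bc)$, and combining with $l(a)<l(b)$ from $ab$ left inclined yields $l(a)<L(bc)$. Thus the two smallest edges of $G$ at $a$ both have label below every edge of the copy. Disjointness is then immediate rather than something to hand-wave away: those two edges go to right vertices, so not to $c$ or $e$; neither is $ab$ since their labels are below $L(ab)$; hence at most one of them is $ad$, and the other furnishes the required fresh vertex $a'$.
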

\begin{proof}
	Let us assume for contradiction that $ xyx'y'x'' $ is a path in $ G_l $ isomorphic to $ P_5^{-2143} $, so $ x, x', x'' $ are left vertices and $ y, y' $ are right vertices. As $xy$ is left inclined in $G$, we have $ l(x) < l(y)$ (the vertex labels come from $G$, not $G_l$). As $x'y$ is also left inclined we have $l(y) \le L(x'y) $. Therefore, the two smallest edges of $G$ at $x$ are both smaller than $x'y$. One of these edges may be $ xy' $ that would create a cycle, but we can extend the path $ xyx'y'x'' $ with the other to get a path $ zxyx'y'x'' $ that is isomorphic to $ P_6^{+13254} $. This contradicts our assumption that $ G $ avoids $ P_6^{+13254} $ and the lemma follows.
\end{proof}
\begin{corollary}\label{co}
	Let $ G $ be an edge-ordered bigraph avoiding $ P_6^{-13254} $, then the subgraph $ G_r $ of $ G $ avoids $ P_5^{+2143} $.
\end{corollary}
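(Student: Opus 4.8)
The plan is to obtain this corollary purely from the left--right symmetry of edge-ordered bigraphs, reducing it directly to Lemma~\ref{lem1}. First I would form the edge-ordered bigraph $\widehat G$ obtained from $G$ by interchanging the roles of its left and right vertices, keeping the underlying simple graph and the labeling $L$ unchanged. The key point is that every notion occurring in Lemma~\ref{lem1} is carried to its mirror image under this swap. The vertex labels $l(v)$ depend only on $L$ and on the degrees, so they are unaffected. An edge $xy$ is left inclined in $\widehat G$ exactly when it is right inclined in $G$: once we rewrite the edge with its \emph{new} left end first, the defining inequality $l(x)<l(y)\le L(xy)$ of ``left inclined'' becomes the inequality $l(y)<l(x)\le L(xy)$ of ``right inclined'', and vice versa. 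Consequently the left-inclined subgraph of $\widehat G$ is precisely the image of $G_r$ under the same swap.

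Next I would translate the forbidden paths. Since in the notation $P_k^{\pm L}$ the sign records only whether the path starts at a right vertex (sign $+$) or a left vertex (sign $-$), interchanging left and right turns $P_6^{-13254}$ into $P_6^{+13254}$ and turns $P_5^{-2143}$ into $P_5^{+2143}$. Hence ``$G$ avoids $P_6^{-13254}$'' is equivalent to ``$\widehat G$ avoids $P_6^{+13254}$'', and ``the left-inclined subgraph of $\widehat G$ avoids $P_5^{-2143}$'' is equivalent to ``$G_r$ avoids $P_5^{+2143}$''.

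Putting these equivalences together finishes the argument: assuming $G$ avoids $P_6^{-13254}$, the bigraph $\widehat G$ avoids $P_6^{+13254}$, so Lemma~\ref{lem1} applied to $\widehat G$ shows that its left-inclined subgraph avoids $P_5^{-2143}$; undoing the swap, this says exactly that $G_r$ avoids $P_5^{+2143}$. The only thing that needs care — the ``obstacle'', such as it is — is checking that the left--right swap really is an involution on edge-ordered bigraphs under which all four relevant notions (the vertex labels, the left/right inclined edges, the signed bipartitions of the paths, and the subgraphs $G_l$, $G_r$) behave as claimed; once this routine bookkeeping is in place the corollary is immediate.
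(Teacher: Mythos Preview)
Your proposal is correct and is exactly the approach the paper takes: the paper's proof is the single line ``The statement follows from Lemma~\ref{lem1} by symmetry,'' and you have simply spelled out that symmetry (the left--right swap $G\mapsto\widehat G$) in full detail. The bookkeeping you flagged as the only delicate point is all accurate, so nothing further is needed.
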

\begin{proof}
	The statement follows from Lemma \ref{lem1} by symmetry.
\end{proof}
	
\begin{lemma}\label{lem2}
	If an edge-ordered bigraph $ G $ avoids $ P_6^{-13254} $ and $ P_5^{-2143} $, then it has $ O(n \log^2 n) $ edges.
\end{lemma}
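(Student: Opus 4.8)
The plan is to combine the inclined decomposition of $G$ with a divide-and-conquer on the vertex labels $l(\cdot)$. First I would split $E(G)$ into the non-inclined edges, the left inclined edges $E(G_l)$, and the right inclined edges $E(G_r)$. By Lemma~\ref{obs} there are at most $2n$ non-inclined edges. For $G_r$: since $G$ avoids $P_6^{-13254}$, Corollary~\ref{co} gives that $G_r$ avoids $P_5^{+2143}$, and since $G$ (and hence $G_r$) avoids $P_5^{-2143}$, the edge-ordered graph underlying $G_r$ avoids both bipartitions of the connected edge-ordered path $P_5^{2143}$ and therefore avoids $P_5^{2143}$ itself; thus $|E(G_r)|\le\ex_<(n,P_5^{2143})=O(n\log n)$ by Theorem~\ref{34edgepath}. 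So everything reduces to proving $|E(G_l)|=O(n\log^2 n)$.

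The crucial feature of $G_l$ is that it is \emph{label-monotone}: every left inclined edge $xy$ satisfies $l(x)<l(y)$, so ordering the vertices by the (fixed) values $l(\cdot)$ --- breaking ties by any fixed rule --- every edge of $G_l$ runs from a left vertex to a strictly later right vertex. I would then do divide-and-conquer: split the vertex set into the lower half $V_1$ and upper half $V_2$ in this order. Edges of $G_l$ with both ends in $V_1$, respectively both ends in $V_2$, again form label-monotone bigraphs on $n/2$ vertices avoiding $P_5^{-2143}$ and $P_6^{-13254}$, and I would recurse on them; it is important here to keep the vertex labels $l(\cdot)$ as inherited from $G$ rather than recomputed inside the subgraphs, since only then is the property $l(y)\le L(xy)$ preserved. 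No edge of $G_l$ can run from $V_2$ to $V_1$ (that would force $l(x)$ to exceed all $l$-values in $V_1$, hence $l(y)>l(x)$ would put $y$ in $V_2$), so the only remaining edges are the \emph{crossing} edges, whose left end lies in $V_1$ and right end in $V_2$. Provided the number of crossing edges is $O(n\log n)$, the recursion $f(n)\le 2f(n/2)+O(n\log n)$ solves to $f(n)=O(n\log^2 n)$.

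Bounding the crossing edges is the main obstacle. They form a bigraph $H\subseteq G$ in which every left vertex precedes every right vertex in the $l$-order, which is still label-monotone with $l(y)\le L(xy)$, and which avoids $P_5^{-2143}$ and $P_6^{-13254}$. The extra separation of the left and right $l$-values --- which $G_l$ itself lacks --- is what I expect to exploit: I would run one further inclined-type decomposition on $H$ (or argue directly about the bipartite structure between $V_1$ and $V_2$) to reduce the count to the extremal function of an already understood short pattern, using the avoidance of $P_5^{-2143}$ and $P_6^{-13254}$ to forbid a suitable $3$- or $4$-edge pattern in $H$: any copy of such a pattern would, thanks to $l(y)\le L(xy)$ and to there being two small edges available at the relevant vertices, extend to a copy of $P_6^{-13254}$ (or of $P_5^{-2143}$) in $G$, exactly in the spirit of the proofs of Lemmas~\ref{lem1} and \ref{obs}. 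Concretely I would aim to show $|E(H)|\le\ex_<(n,P_5^{2143})$, or to reduce $H$ to a bigraph avoiding both $P_4^{+132}$ and $P_4^{+213}$, whose extremal function is $\Theta(n\log n)$ by Lemma~4.11 of \cite{GMNPTV}, so that $|E(H)|=O(n\log n)$. Making this last reduction precise, and carrying the fixed labeling $l(\cdot)$ through the recursion, is where the real work lies; the recursion itself, the non-inclined edges, and $G_r$ are routine.
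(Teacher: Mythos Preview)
Your reduction to bounding $|E(G_l)|$ is correct and matches the way the paper deploys Corollary~\ref{co} and Lemma~\ref{obs}. But the heart of your argument --- the divide-and-conquer on the vertex labels $l(\cdot)$ --- stops exactly at the hard step: you never bound the crossing edges. You gesture at showing $H$ avoids $P_5^{2143}$ or reducing to a $\{P_4^{+132},P_4^{+213}\}$-free bigraph, but neither follows from the information you have. The separation ``every left $l$-value is below every right $l$-value'' tells you something about edges of $G$ incident to the endpoints of crossing edges, yet those witness edges need not be crossing edges themselves (they can live inside $V_1$ or $V_2$, or even in $G_r$), so an attempted extension of a short pattern inside $H$ to $P_6^{-13254}$ or $P_5^{-2143}$ in $G$ has no control over where the added vertex lands. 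In particular there is no evident reason $H$ should avoid $P_5^{+2143}$; the inequality $l(y)\le L(xy)$ only gives you small edges at $y$ in $G$, not in $H$. Without a concrete mechanism here, the recursion does not close.

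The paper's proof avoids this obstacle by splitting on \emph{edge} labels rather than vertex labels. It halves $E(G)$ into a lower half $A$ and upper half $B$, trims two edges per vertex from each to get $A'$ and $B'$, and only then performs the inclined decomposition on $G'=A'\cup B'$. The $P_5^{-2143}$ hypothesis is used directly to show no \emph{left} vertex meets both $A'$ and $B'$; then the left-inclined condition on $G'_l$ propagates this to right vertices, so that $A''=A'\cap G'_l$ and $B''=B'\cap G'_l$ are supported on disjoint vertex sets. This gives the recursion $f(n)\le 2f(\lfloor n/2\rfloor)+O(n\log n)$ with no crossing term at all. The missing idea in your approach is precisely this vertex-disjointness argument: the $P_5^{-2143}$ avoidance is what splits the graph, not a bound on a residual crossing piece.
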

\begin{proof}
	Let $ G $ be an edge-ordered bigraph avoiding $ P_6^{-13254} $ and $ P_5^{-2143} $ on $ n $ vertices with the maximal number $m$ of edges. Consider the subgraph $ A $ formed by the $ \lfloor m/2 \rfloor $ smallest edges of $ G $ (the lower half) and let $ B $ be the subgraph formed by the remaining edges of $ G $ (the upper half). Let $ A' $ be the subgraph of $ A $ consisting of those edges that are not among the two highest edges in $ A $ incident to any vertex. Similarly, let $ B' $ be the subgraph of $ B $ consisting of those edges that are not among the two smallest edges in $ B $ incident to any vertex. (All the subgraphs we consider contain all vertices of $G$.)
		
Consider the graph $ G' = A' \cup B' $ consisting of the edges belonging to $A'$ or $B'$ and partition it into the subgraphs $ G'_l $; $ G'_r $ and the remaining part formed by the non-inclined edges. As a subgraph of $G$, $ G'$ also avoids $P_6^{-13254}$, so by Corollary~\ref{co} $G'_r $ avoids $ P_5^{+2143} $. But $G'_r$ is also a subgraph of $G$, so it must avoid $P_5^{-2143}$ too. Therefore the edge-ordered graph underlying $G'_r$ avoids $ P_5^{2143} $. By Theorem~\ref{34edgepath} this underlying edge-ordered graph (and thus $ G'_r $ also) has $ O(n \log n) $ edges.

Assume there exists a left vertex $ x $ of $ G' $ that is incident to some edge from $A'$ and also to some edge from $B'$. Say the edge $ xy $ is in $ A' $ and the edge $ xz $ is in $ B' $. Let $ uy $ be the highest labeled edge in $ A $ incident to $ y $. Recall, that $ uy $ is not in $ A' $, so we have $ L(xy) < L(uy) $. Let $ vz $ be one of the two lowest labeled edges in $ B $ incident to $ z $, choosing between them to ensure $ u \ne v $. As before, we have $ L(xz) > L(vz) $. As $ vz $ is an edge in $ B $ and $ uy $ is an edge in $ A $, we must also have $ L(uy) < L(vz) $. Therefore the path $ uyxzv $ is isomorphic to $ P_5^{-2143} $ contradicting our assumption that $G$ avoids $P_5^{-2143}$. The contradiction proves that no left vertex can be incident to edges in both $ A' $ and $ B' $.

We split $G'_l$ into the subgraphs $ A''=A'\cap G'_l $ and $B''=B'\cap G'_l$ formed by edges of $G'_l$ in the lower and upper halves, respectively. Assume there is a right vertex $ y $ that is incident to an edge $xy$ from $A''$ and also to another edge $x'y$ from $B''$. As $x'y$ is left inclined in $G'$ we have $ l(x') < l(y)$ and as $xy$ is also left inclined in $G'$ we have $l(y) \le L(xy) $, where $ l(\cdot) $ refers to the vertex labels in $ G' $ determining left and right inclined, so $ l(x') $ is the second lowest label of an edge in $ G' $ incident to $ x' $. From $l(x')<L(xy)$ we conclude that the smallest edge at $x'$ in $G'_l$ is smaller than $xy$. But $xy$ is from the lower half of $G'$, therefore $x'$ is incident to an edge also from $A'$. But the same left vertex $x'$ also incident to $x'y$ from $B'$, a contradiction. The contradiction proves that edges from $A''$ and $B''$ cannot meet at a right vertex. But they cannot meet at a left vertex either, as we have established above that even the edges from $A'$ and $B'$ do not meet at a left vertex.

At most $2n$ of the $ \lfloor m/2 \rfloor $ edges of $A$ were removed to create $ A' $, so it has at least $ \lfloor m/2 \rfloor - 2n $ edges. All of these show up in $A''$ except those that are non-inclined in $G'$ ($O(n)$ edges by Lemma~\ref{obs}) or are in $ G'_r $ ($O(n\log n)$ edges). Therefore, $A''$ has $ m/2  - O(n \log n) $ edges. Similarly, $B''$ also has $m/2-O(n\log n)$ edges.

Let $ f(N) $ be the maximum number of edges in an $ N $-vertex edge-ordered bigraph that avoids $ P_6^{-13254} $ and $ P_5^{-2143} $. As $ m $ was chosen to be maximal we have $ m=f(n) $. Since edges in $A''$ and $B''$ do not meet, either $ A'' $ or $ B''$ has at most $ \lfloor n/2 \rfloor $ non-isolated vertices and therefore at most $ f(\lfloor n/2 \rfloor) $ edges, proving $f(\lfloor n/2 \rfloor)\ge m/2 -O(n\log n)$. Using $m=f(n)$ this implies the recursion $ f(n) \le 2f(\lfloor n/2\rfloor) + O(n\log n) $, which, in turn, implies, $ f(n) = O(n \log^2 n) $ as needed.
\end{proof}
	
\begin{proof}[Proof of Theorem~\ref{13254}]
	Let $ G $ be an edge-ordered graph with $ n $ vertices and the maximal number $ m = \ex_<(n, P_6^{13254}) $ of edges that avoids $ P_6^{13254} $. Partition the vertex set of $ G $ to left and right vertices and consider the edge-ordered bigraph $ G' $ formed by the edges of $G$ between the left and the right vertices. This partition can be chosen in such a way that $ G' $ contains at least $ m/2 $ edges. Clearly, $G'$ avoids both $P_6^{+13254}$ and $P_6^{-13254}$.
		
	The subgraph $ G'_l $ of $ G' $ formed by its left inclined edges avoids $ P_5^{-2143} $ by Lemma~\ref{lem1}. As a subgraph of $G'$ it also avoids $P_6^{-13254}$, so by Lemma~\ref{lem2}, $ G'$ has $ O(n \log^2 n) $ left inclined edges. By symmetry, there are also $ O(n \log^2 n) $ right inclined edges in $G'$. By Lemma~\ref{obs}, $G'$ has $ O(n) $ non-inclined edges. So $ G' $ has $ O(n \log^2 n) $ edges in total, that is $ m/2 = O(n \log^2 n) $ which implies $ \ex_<(n, P_6^{13254}) =m= O(n \log^2 n) $.
\end{proof}

\section{Concluding remarks and open problems}\label{sec5}

\noindent{\bf A.} Semi-caterpillars and especially right caterpillars proved to be important for the dichotomy stated in Theorem~\ref{dich}. Here we list some of their properties that may be instructive for getting the right intuition about them. We did not use these observations to arrive to Theorem~\ref{dich}, so we omit their simple proofs.

1. If we insist that all pairs of consecutive edges are adjacent in an edge-ordered tree, then we obtain a sub-class of semi-caterpillars. Note that in this case the underlying simple graphs are (conventional) caterpillars: each vertex is at distance at most one from a single path. The fact that these edge-ordered trees have a linear extremal function can be deduced easily from Lemma~\ref{add} that was already implicit in \cite{GMNPTV}. Semi-caterpillars form a larger class of edge-ordered graphs, their underlying simple graphs are not necessarily caterpillars. In fact, any tree is a subgraph of the underlying simple graph of a semi-caterpillar. Right caterpillars are intermediate in this respect: their underlying simple graph does not have to be caterpillars (as seen in Figure~\ref{fig1}), but it has a single path with all vertices in distance at most two from it (and in particular, all right vertices in distance at most one).

2. If two consecutive edges $e_1<e_2$ in a right caterpillar are connected by a larger edge $e$, then the right end of $e$ must be incident to $e_2$, as otherwise (being incident to $e_1$ and $e$ but not to $e_2$) it would not be close. This condition is indeed enough to ensure that a bigraph with an underlying semi-caterpillar is a right caterpillar. 

\begin{figure}[h]
\centering
\fbox{\includegraphics[scale=0.8]{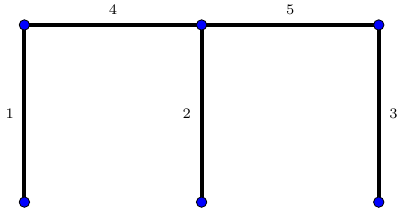}}
\caption{A semi-caterpillar whose subgraph (deleting the edge labeled 2) is not a semi-caterpillar.}
\label{fig4}
\end{figure}

3. A connected subgraph of a right caterpillar is a right caterpillar. Similar statement fails to hold for semi-caterpillars (see Figure~\ref{fig4}).

\medskip

\noindent{\bf B.} Theorem~\ref{dich} characterizes all \emph{connected} edge-ordered graphs with a linear extremal function. F\"uredi et al., \cite{FKMV} gave similar characterizations for connected vertex-ordered and convex geometric graphs. We find that it is worth to state a similar characterization (and dichotomy) also for 0-1 matrices. This setting is very closely related to vertex-ordered graphs. In fact, it directly corresponds to bipartite graphs whose two vertex classes are separately linearly ordered.

We say that a 0-1 matrix $A$ \emph{contains} another 0-1 matrix $B$ if $B$ can be obtained from a submatrix of $A$ by possibly decreasing some entries. Otherwise we say that $A$ \emph{avoids} $B$. The \emph{extremal function} $\eex(n,B)$ of a not-all-0 0-1 matrix $B$ is defined as
$$\eex(n,B)=\max\{\text{\#\ of 1 entries in }A\mid A\text{ is an }n\times n\text{ 0-1 matrix avoiding }B\}.$$

Characterizing 0-1 matrices with a linear extremal function seems to be beyond our reach (just as is the case for vertex-ordered graphs, convex geometric graphs or edge-ordered graphs), but it is relatively easy for \emph{connected} 0-1 matrices. We call a 0-1 matrix \emph{connected} if it is the bipartite adjacency matrix of a connected graph. See a conjecture for the characterization of \emph{light} 0-1 matrices with linear extremal functions in \cite{PettieT}. Light 0-1 matrices can be considered as the opposite of connected ones: we call a 0-1 matrix \emph{light} if each column contains a single 1.

We call a sequence of positions $(i_k,j_k)$ for $1\le k\le t$ in the integer grid a \emph{staircase} if $(i_{k+1},j_{k+1})$ is obtained from $(i_k,j_k)$, for any $1\le k<t$, by increasing exactly one of the two coordinates by one. We say that such a staircase \emph{describes} the $n\times m$ 0-1 matrix $A=(a(i,j))$ if \begin{align*}
&a(i_k,j_k)=1, \quad \text{for } 1\le k\le t, \\
&a(i_1,j)=1, \quad \text{for } 1\le j<j_1, \\
&a(i,j_1)=1, \quad \text{for } 1\le i<i_1, \\
&a(i_t,j)=1, \quad \text{for } b_t<j\le m, \\
&a(i,j_t)=1, \quad \text{for } i_t<i\le n, \\
&a(i,j)=0, \quad \text{otherwise}.
\end{align*}

We say that a 0-1 matrix $A$ is a \emph{staircase matrix} if either $A$ or the matrix obtained by reversing the order of the column in $A$ is described by a staircase. See Figure~\ref{fig5} for examples of staircase matrices.

\begin{figure}
\centering
\fbox{\includegraphics{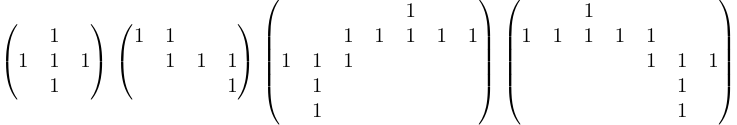}}
\caption{Staircase matrices. (Zeroes are omitted for clarity.)}
\label{fig5}
\end{figure}

Note that all staircase matrices are connected. In fact, each staircase matrix is the bipartite adjacency matrix of a caterpillar. We obtain the following characterization for the extremal function of connected 0-1 matrices.

\begin{theorem}\label{thm6}
\begin{enumerate}
\item The extremal function of a staircase matrix $A$ is linear: $\eex(n,A)=O(n)$.
\item If $A$ is a connected $0\text{-}1$ matrix but not a staircase matrix, then $\eex(n,A)=\Omega(n\log n)$.
\end{enumerate}
\end{theorem}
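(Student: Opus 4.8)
The plan is to prove the two parts in parallel with the two halves of Theorem~\ref{dich}, using the standard dictionary between $n\times n$ $0$-$1$ matrices and vertex-ordered bipartite graphs on $2n$ vertices (all row-vertices before all column-vertices): since a connected bipartite graph has a unique bipartition, matrix containment of a connected pattern $B$ corresponds to order-respecting subgraph containment of the associated ordered bipartite graphs, up to transposing $B$. In particular $\eex(n,A)$ and the vertex-ordered extremal function of the ordered bipartite graph of $A$ agree up to constants for connected $A$.

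For part~1, recall that a staircase matrix $A$ is the bipartite adjacency matrix of a caterpillar, with its rows and its columns each listed along the spine. With this description $A$ can be assembled from the $1\times1$ matrix by a sequence of elementary \emph{extension} steps, each appending a new row (or column) incident only to the current outermost cell and becoming the new outermost cell --- the matrix analogue of the extensions defined before Lemma~\ref{equivalence}. A single extension increases the extremal function by at most an additive $O(n)$ (this is essentially Lemma~\ref{add}), but, as in Section~\ref{sec2}, a whole sequence of extensions must be handled at once: I would introduce iterated subgraphs formed by the ``leaning'' $1$-entries (a $1$-entry far enough from the top of its column and from the left of its row to be dominated on both sides), show that only $O(n)$ entries are lost at each of the boundedly many iteration steps, and conclude $\eex(n,A)=O(n)$ --- the matrix versions of Lemmas~\ref{2cn}, \ref{riglef}, \ref{T} and \ref{lin}. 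Alternatively one can route this through the characterization of connected vertex-ordered graphs with a linear extremal function of F\"uredi, Kostochka, Mubayi and Verstra\"ete \cite{FKMV}, after checking that the ordered bipartite graph attached to a staircase matrix lies in their linear class.

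For part~2, let $A$ be connected but not a staircase and let $H$ be its (connected) bipartite graph. First I would dispose of the non-forest case: if $H$ has a cycle it has an even cycle $C_{2k}$, and restricting $A$ to those $2k$ vertices exhibits the $C_{2k}$ pattern as a submatrix; since $\eex(n,K_{2,2})=\Theta(n^{3/2})$ and ordered cycles have superlinear (in fact $\Omega(n\log n)$) extremal functions, this case is settled. So assume $H$ is a tree. If $H$ is not a caterpillar it contains the subdivided claw $S_{2,2,2}$; a short analysis of where the claw's centre sits in the row order shows that $A$ then contains $\left(\begin{smallmatrix}1&1\\1&0\\0&1\end{smallmatrix}\right)$ up to transposition and reversal of the rows or columns --- a connected non-staircase matrix, whose extremal function is $\Omega(n\log n)$. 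Finally, if $H$ is a caterpillar but $A$ is not a staircase, the defect is a local inversion between the order along the spine and the row or column order, and a finite case check should show that $A$ contains one of a short list of bad orderings of small caterpillars --- for instance $\left(\begin{smallmatrix}1&0\\1&1\\1&0\end{smallmatrix}\right)$ or $\left(\begin{smallmatrix}1&1&1\\0&1&0\end{smallmatrix}\right)$ --- each again with extremal function $\Omega(n\log n)$. This mirrors the way Theorem~\ref{dich} combines Theorem~\ref{semi} with the superlinear path bounds of \cite{GMNPTV}.

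The main obstacle is part~2: one must pin down the \emph{complete} finite list of obstruction matrices and supply, for each one that is not already in the literature, an $\Omega(n\log n)$ lower-bound construction; I would build these in the recursive ``split into an upper and a lower half'' style used for the $\Theta(n\log n)$ edge-ordered paths in \cite{GMNPTV} and again in the proof of Theorem~\ref{13254}. A cleaner but more technical alternative is to make the matrix/vertex-ordered translation fully rigorous --- the only real subtlety being the transpose ambiguity of how a connected bipartite graph embeds into another, handled exactly as bipartitions handle it in Section~\ref{sec2} --- and then quote the characterization and the linear-versus-$\Omega(n\log n)$ dichotomy of \cite{FKMV} for connected vertex-ordered graphs directly, after verifying that ``staircase matrix'' is precisely their linear class in this case.
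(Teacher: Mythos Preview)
Your approach to Part~1 is workable but substantially heavier than necessary. The point is that, unlike in the edge-ordered setting, here the simple one-edge-at-a-time argument already suffices: F\"uredi and Hajnal \cite{FH} showed that adding a single new row or column to the boundary of a matrix with a single $1$-entry adjacent to an existing $1$ increases the extremal function by $O(n)$, and every staircase matrix can be assembled from $(1)$ by a sequence of such elementary operations. There is no phenomenon analogous to the two-sided extensions of Section~\ref{sec2} that forces you to handle all the steps at once, so the leaning-edge machinery (Lemmas~\ref{2cn}--\ref{T}) is not needed.

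For Part~2 there is a genuine gap. The two matrices you propose as obstructions in the ``caterpillar but not staircase'' case,
\[
\begin{pmatrix}1&0\\1&1\\1&0\end{pmatrix}\quad\text{and}\quad\begin{pmatrix}1&1&1\\0&1&0\end{pmatrix},
\]
are in fact staircase matrices (the single-point staircase $(2,2)$ describes the column-reversal of the first, and $(1,2)$ describes the second), so by Part~1 they have linear extremal functions and cannot serve as obstructions at all. This shows that the ``local inversion along the spine'' heuristic does not, by itself, isolate the right forbidden configurations, and that your finite case check still lacks its key ingredient. The paper's argument bypasses the tree/non-tree and caterpillar/non-caterpillar case split entirely: it shows directly that any connected non-staircase matrix contains either $\left(\begin{smallmatrix}1&1\\1&1\end{smallmatrix}\right)$ or one of the eight matrices obtained from $\left(\begin{smallmatrix}1&1&0\\1&0&1\end{smallmatrix}\right)$ by transposition and rotation, and then cites the known bounds $\eex\!\left(n,\left(\begin{smallmatrix}1&1\\1&1\end{smallmatrix}\right)\right)=\Theta(n^{3/2})$ and $\eex\!\left(n,\left(\begin{smallmatrix}1&1&0\\1&0&1\end{smallmatrix}\right)\right)=\Theta(n\log n)$ from \cite{Furedi}. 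Finding this single short list of obstructions is the content you are missing.
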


\begin{proof}[Sketch of proof]
F\"uredi and Hajnal, \cite{FH} define an \emph{elementary operation} on a 0-1 matrix $A$ as adding new  row or column to the boundary of $A$ and placing a single 1 entry in the new column or row directly next to an existing 1 entry in $A$. They observe that such an elementary operation increases the extremal function by at most an additive linear term. Part 1 of Theorem~\ref{thm6} follows from the observation that each staircase matrix can be obtained from the $1\times1$ matrix $(1)$ by repeated elementary operations. Note that $\eex(n,(1))=0$. 

Part 2 of Theorem~\ref{thm6} follows from the observation that each connected 0-1 matrix that is \emph{not} a staircase matrix either contains $\begin{pmatrix}1&1\\1&1\end{pmatrix}$ or one of the eight matrices that can be obtained from $\begin{pmatrix}1&1&0\\1&0&1\end{pmatrix}$ or its transpose by rotating them $0$, $90$, $180$ or $270$ degrees. Note that $\eex\left(n,\begin{pmatrix}1&1\\1&1\end{pmatrix}\right)=\Theta(n^{3/2})$, while $\eex\left(n,\begin{pmatrix}1&1&0\\1&0&1\end{pmatrix}\right)=\Theta(n\log n)$, \cite{Furedi}.
\end{proof}

The problem of determining $\eex(n,A)$ has been particularly extensively studied for so-called acyclic matrices $A$. Recently Janzer, Janzer, Magnan and Methuku \cite{JJMM} have proved tight bounds for non-acyclic matrices. 

\medskip

\noindent{\bf C.} We identified five essentially different edge-orderings of the 5-edge path $P_6$ in Section~\ref{sec4} whose extremal functions cannot be bounded by the results of \cite{GMNPTV}. Theorem~\ref{13254} gives an $O(n\log^2n)$ upper bound for the extremal function of one of them. For the other four, namely $P_6^{14523}$, $P_6^{14532}$, $P_6^{15423}$ and $P_6^{21453}$ the best upper bound for their extremal function comes from the general results in \cite{KT} and is of the form $n2^{O\left(\sqrt{\log n}\right)}$. The same paper formulated the stronger $n$ times polylog$(n)$ upper bound for all order chromatic number 2 edge-ordered forests as an ambitious conjecture. Proving this type of bound for some of those four remaining edge-orderings of $P_6$ would be a step towards verifying this conjecture.

Note, however that the analogous conjecture for 0-1 matrices (and hence for vertex ordered graphs) has been recently refuted, see \cite{PettieT}. In fact, Seth Pettie and the second author established that $\eex(n,S)=n2^{\Omega\left(\sqrt{\log n}\right)}$ for the matrix $S=\begin{pmatrix}1&0&1&0\\1&0&0&1\\0&1&0&1\end{pmatrix}$. $S$ was earlier identified in \cite{PT} as one of the two smallest matrices for which the conjecture bounding their extremal function by $O(n\log^cn)$ could not be verified and  Kor\'andi et al.~\cite{KTTW} proved $\eex(n,S)=n2^{O\left(\sqrt{\log n}\right)}$, a bound that was later shown to be tight in \cite{PettieT}.

Based on this analogy, we would not be shocked if our general conjecture on the extremal function of order chromatic number 2 edge-ordered forests failed or even if it failed for one of the above mentioned four edge-orderings of $P_6$. Right now, however, we do not know of a single edge-ordered forest of order chromatic number 2 whose extremal function is \emph{not} $O(n\log n)$. The first goal here is to prove a lower bound that beats this barrier.
\medskip

\noindent{\bf D.} We treat the extremal function problem largely solved for edge-ordered graphs for which we know the order of the magnitude of the extremal function. But further studies can also focus on finding the exact asymptotics for them. Even in the simplest case, namely the case of monotone paths, we do not have the leading constant in their linear extremal functions.
\medskip

\noindent{\bf E.} The edge-ordered variant of the Erd\H os-Stone-Simonovits theorem, see \cite{GMNPTV}, establishes the importance of the order chromatic number of edge-ordered graphs. This value is a positive integer or infinity. It was established in \cite{GMNPTV} that this value is two, three or infinity for edge-ordered paths of 3 or 4 edges. Here we established the same for edge-ordered paths of 5 edges. It would be interesting to figure out how high the order chromatic number of an edge-ordering of $P_k$ can be, if it is finite. A similar question was asked about general edge-ordered graphs on $k$ vertices in \cite{GMNPTV} and a triply-exponential upper bound was given, while, as a lower bound, edge-ordered graphs on $k$ vertices were constructed with finite, but exponential-in-$k$ order chromatic number. However the edge-ordered graphs in this example are not paths and we expect a lower value in this case. Specifically, we ask if the order chromatic number of an edge-ordering of $P_k$ is always \emph{polynomial} in $k$, if finite.

\thebibliography{99}

\bibitem{BKV} P. Bra\ss, Gy. K\'arolyi, P. Valtr, A Tur\'an-type extremal theory of convex geometric graphs, {\sl Discrete and Computational Geometry---The Goodman-Pollack Festschrift} (B. Aronov et al., eds.), Springer-Verlag, Berlin, 2003, 277--302.

\bibitem{Furedi} Z. F\"uredi, The maximum number of unit distances in a convex $n$-gon, {\sl J. of Combinatorial Theory, Ser. A} {\bf 55} (1990), 316--320.

\bibitem{FH} Z. F\"uredi, P. Hajnal, Davenport-Schinzel theory of matrices, {\sl Discrete Mathematics} {\bf103} (1992), 233-251.

\bibitem{FKMV} Z. F\"uredi, A. Kostochka, D. Mubayi, J. Verstra\"ete, Ordered and convex geometric trees with linear extremal function, {\sl Discrete \& Computational Geometry} {\bf64} (2020), 324--338.

\bibitem{GMNPTV} D. Gerbner, A. Methuku, D. Nagy, D. P\'alv\"olgyi, G. Tardos, M. Vizer, Turán problems for edge-ordered graphs, Journal of Combinatorial Theory, Series B (2023), 160, 66-113.

\bibitem{JJMM} B. Janzer, O. Janzer, V. Magnan, A. Methuku. Tight General Bounds for the Extremal Numbers of 0–1 Matrices, manuscript, arXiv:2403.04728.

\bibitem{KP} B. Keszegh, D. P\'alv\"olgyi, The number of tangencies between two families of curves, {\sl Combinatorica} {\bf43} (2023), 939-952.

\bibitem{KTTW} D. Kor\'andi, G. Tardos, I. Tomon, C. Weidert, On the Tur\'an number of ordered forests, {\sl J. of Combinatorial Theory, Ser. A} {\bf165} (2019), 32--43.

\bibitem{Kthesis} G. Kucheriya, Master's thesis, Moscow Institute of Physics and Technology, 2021.

\bibitem{KT} G. Kucheriya, G. Tardos, A characterization of edge-ordered graphs with almost linear extremal functions, {\sl Combinatorica} {\bf43} (2023), 1111–1123.

\bibitem{EC23}G. Kucheriya, G. Tardos, On edge-ordered graphs with linear extremal functions, {\sl Proceedings of EUROCOMB'23}, 2023, 688--694.

\bibitem{PT}J. Pach, G. Tardos, Forbidden paths and cycles in ordered graphs and matrices, {\sl Israel Journal of Mathematics} {\bf155} (2006), 359--380.

\bibitem{PettieT} S. Pettie, G. Tardos, A refutation of the Pach-Tardos Conjecture for 0-1 matrices, manuscript, arXiv:2407.02638.

\bibitem{R} V. R\"odl. Master’s thesis, Charles University, 1973.

\end{document}